\numberwithin{equation}{section}
\newtheorem{thm}[equation]{Theorem}
\newtheorem{prop}[equation]{Proposition}
\newtheorem{cor}[equation]{Corollary}
\newtheorem{lem}[equation]{Lemma}
\theoremstyle{definition}
\newtheorem{ex}[equation]{Example}
\newtheorem{qu}[equation]{Question}
\newtheorem{rem}[equation]{Remark}
\renewcommand{\dim}{\operatorname{\mathsf{dim}}}
\renewcommand{\deg}{\operatorname{\mathsf{deg}}}
\renewcommand{\sup}{\operatorname{\mathsf{sup}}}
\newcommand\ind{\operatorname{\mathsf{ind}}}
\renewcommand\exp{\operatorname{\mathsf{exp}}}
\newcommand\Br{\operatorname{\mathsf{Br}}}
\newcommand\cchar{\operatorname{\mathsf{char}}}
\newcommand\N{\operatorname{\mathsf{N}}}
\newcommand{\la}{\langle}
\newcommand{\ra}{\rangle}
\newcommand{\mg}[1]{{#1}^{\times}}
\newcommand{\sq}[1]{{#1}^{\times 2}}
\newcommand{\s}{\sigma}
\renewcommand{\setminus}{\smallsetminus}
\DeclareMathOperator*{\bigperp}{\raisebox{-.4ex}{\scalebox{1.4}{$\perp$}}}
\renewcommand{\bmod}{\,\,\mathsf{mod}\,\,}
\newcommand{\lla}{\la\!\la}
\newcommand{\rra}{\ra\!\ra}
\newcommand{\half}{\mbox{$\frac{1}2$}}
\newcommand{\I}{\mathsf{I}}
\newcommand{\W}{\mathsf{W}}
\newcommand{\sots}[1]{\mathsf{S}_2(#1)}
\DeclareMathOperator*{\newsum}{\raisebox{-0.3 ex}{\scalebox{1.3}{$\mathsf{\Sigma}$}}}
\renewcommand{\sum}{\newsum}
\renewcommand{\leq}{\leqslant}
\renewcommand{\geq}{\geqslant}
\renewcommand{\max}{\mathsf{max}}
\newcommand{\nat}{\mathbb{N}}
\newcommand{\follows}[2]{$(#1\Rightarrow #2)$\,}
\begin{document}
\title[A bound on the index of exponent-$4$ algebras]{A bound on the index of exponent-$4$ algebras in terms of the $u$-invariant}

\date{15.10.2023}
	
\author{Karim Johannes Becher}
\author{Fatma Kader B\.{i}ng\"{o}l}

\address{University of Antwerp, Department of Mathematics, Middelheim\-laan~1, 2020 Ant\-werpen, Belgium.}

\email{KarimJohannes.Becher@uantwerpen.be}
\email{FatmaKader.Bingol@uantwerpen.be}
	
\begin{abstract}
For a prime number $p$, an integer $e\geq 2$ and a field $F$ containing a primitive $p^e$-th root of unity, the index of central simple $F$-algebras of exponent $p^e$ is bounded in terms of the $p$-symbol length of $F$.
For a nonreal field $F$ of characteristic different from $2$, the index of central simple algebras of exponent $4$ is bounded in terms of the $u$-invariant of $F$.
Finally, a new construction for nonreal fields of $u$-invariant $6$ is presented.
	
\medskip\noindent
{\sc Classification} (MSC 2020): 12E15, 16K20, 16K50
 
\medskip\noindent
{\sc{Keywords:}} Brauer group, cyclic algebra, symbol length, index, exponent, $u$-invariant
\end{abstract}

\maketitle
	
\section{Introduction} 
Let $F$ be a field and $n$ a positive integer.
A central simple $F$-algebra of degree $n$ containing a subfield which is a cyclic extension of degree $n$ of $F$ is called \emph{cyclic} or a \emph{cyclic $F$-algebra}.
Given a cyclic field extension $K/F$ of degree $n$,  a generator $\s$ of its Galois group and  an element $b\in F^{\times}$, the rules
\begin{equation*}
    j^n=b\quad\text{and}\quad xj=j\sigma(x)\quad \text{for all}\quad x\in K
\end{equation*}
determine a multiplication on the $K$-vector space $K\oplus jK\oplus\ldots\oplus j^{n-1}K$ turning it into a cyclic $F$-algebra of degree $n$, which is denoted by $$[K/F,\sigma,b).$$   
Any cyclic $F$-algebra is isomorphic to an algebra of this form; see \cite[Theorem 5.9]{Alb39}.
Furthermore, any central $F$-division algebra of degree $2$ or $3$ is cyclic; see \cite[Theorem 11.5]{Alb39} for the degree-$3$ case.

Central simple $F$-algebras of degree $2$ are called \emph{quaternion algebras}.
We refer to \cite[p.~25]{BOI} for a discussion of quaternion algebras, including their standard presentation by symbols depending on two parameters from the base field.
If $\cchar F\neq2$, $a\in \mg{F}\setminus\sq{F}$ and $b\in\mg{F}$, then the $F$-quaternion algebra $(a,b)_F$ is equal to $[K/F,\s,b)$ for $K=F(\sqrt{a})$ and the nontrivial automorphism $\s$ of $K/F$.

We refer to \cite{Alb39} and \cite{Draxl} for the theory of central simple algebras, and to
\cite[Section 3]{ABGSV2011} for a survey on the role of cyclic algebras in this context.

Before we approach the problem in the focus of our interest, we fix some notation.
We set $\nat^+=\nat\setminus\{0\}$.
We denote by $\Br(F)$ the Brauer group of $F$, and for $n\in\nat^+$, we denote by $\Br_n(F)$ the $n$-torsion part of $\Br(F)$.
Let $p$ always denote a prime number. 

The following question was asked by Albert in \cite[p.126]{Albert1936} and is still open in general.

\begin{qu}\label{p-primary-torsion-gen-by-cyclic?}
For $n\in\nat^+$, is $\Br_{n}(F)$ generated by classes of cyclic algebras of degree dividing $n$?
\end{qu}

In view of the Primary Decomposition Theorem for central simple algebras (see e.g.~\cite[Corollary 9.11]{Draxl}), any such question can be reduced to the case where $n$ is a prime power.
Each of the following two famous results gives a positive answer to \Cref{p-primary-torsion-gen-by-cyclic?} under additional hypotheses on $F$ in relation to $n$.

\begin{thm}[Albert]\label{Albert-p-primary-torsion-Br-cyclic gen}
    Let $p$ be a prime number and assume that $\cchar F=p$. Let $e\in\nat^+$. 
    Then $\Br_{p^e}(F)$ is generated by classes of cyclic $F$-algebras of degree dividing $p^e$.
\end{thm}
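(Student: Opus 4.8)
The statement is equivalent to saying that every class in $\Br_{p^e}(F)$ is a finite sum of classes of cyclic $F$-algebras of degree dividing $p^e$ (signs do no harm, since the opposite algebra of $[K/F,\s,b)$ is again cyclic of the same degree). The plan is an induction on $e$, driven by two facts about cyclic $p$-power extensions in characteristic $p$. \emph{Liftability}: every cyclic extension $K/F$ of degree $p^m$ with $m\geq1$ is contained in a cyclic extension $\wt K/F$ of degree $p^{m+1}$ in which $K$ is the subfield of index $p$, and every generator $\s$ of $\Gal(K/F)$ extends to a generator $\wt\s$ of $\Gal(\wt K/F)$. \emph{Compatibility}: for such a tower $F\subseteq K\subseteq\wt K$ and every $b\in\mg F$, one has $p\cdot[\wt K/F,\wt\s,b)=[K/F,\s,b)$ in $\Br(F)$. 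In both, the hypothesis $\cchar F=p$ is essential.

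The base case $e=1$ is classical (Teichm\"uller; see \cite{Alb39}): $\Br_p(F)$ is generated by the classes of cyclic $p$-algebras $[F(\wp^{-1}(a))/F,\s,b)$ with $a\in F$ and $b\in\mg F$, and these have degree dividing $p^e$. One way to see this is through the surjection $\Omega^1_F\twoheadrightarrow\Br_p(F)$ sending $a\,\tfrac{db}{b}$ to the symbol $[a,b)$, together with the fact that $\Omega^1_F$ is generated as an abelian group by the forms $a\,\tfrac{db}{b}$ with $a\in F$ and $b\in\mg F$.

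For the inductive step, assume $e\geq2$ and the result for $e-1$. Multiplication by $p$ maps $\Br_{p^e}(F)$ into $\Br_{p^{e-1}}(F)$ with kernel $\Br_p(F)$. Let $\alpha\in\Br_{p^e}(F)$. By the induction hypothesis $p\alpha=\sum_i[K_i/F,\s_i,b_i)$, where each $K_i/F$ is cyclic of degree $p^{m_i}$; dropping the (trivial) terms with $m_i=0$, we may assume $1\leq m_i\leq e-1$. Using liftability, choose for each $i$ a cyclic extension $\wt K_i/F$ of degree $p^{m_i+1}$ (which divides $p^e$) and a generator $\wt\s_i$ restricting to $\s_i$; by compatibility, $p\cdot[\wt K_i/F,\wt\s_i,b_i)=[K_i/F,\s_i,b_i)$. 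Hence $\beta:=\alpha-\sum_i[\wt K_i/F,\wt\s_i,b_i)$ satisfies $p\beta=0$, so $\beta\in\Br_p(F)$, and by the base case $\beta$ is a sum of classes of cyclic $p$-algebras. Therefore $\alpha$ is a sum of classes of cyclic $F$-algebras of degree dividing $p^e$, completing the induction.

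The two facts remain to be proved, and here the decisive input is the vanishing $\HH^2\bigl(\Gal(F_{\mathsf{sep}}/F),\zz/p\zz\bigr)=0$ (i.e.\ $\mathrm{cd}_p$ of the absolute Galois group is at most $1$), valid because $\cchar F=p$. For liftability, write $G=\Gal(F_{\mathsf{sep}}/F)$ and let $\chi\colon G\to\zz/p^m\zz$ be the surjection attached to $\s$; from $0\to\zz/p\zz\to\zz/p^{m+1}\zz\to\zz/p^m\zz\to0$ and the vanishing of $\HH^2(G,\zz/p\zz)$, the character $\chi$ lifts to some $\wt\chi\colon G\to\zz/p^{m+1}\zz$, and the image of any such lift, being a subgroup of $\zz/p^{m+1}\zz$ that surjects onto $\zz/p^m\zz$, equals $\zz/p^{m+1}\zz$ when $m\geq1$; this $\wt\chi$ yields $\wt K$ and $\wt\s$. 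Compatibility is the delicate step, and I would derive it from the cohomological formula for cyclic algebras: $[\wt K/F,\wt\s,b)=\delta(\wt\chi)\cup(b)$, where $\delta\colon\HH^1(G,\zz/p^{m+1}\zz)\to\HH^2(G,\zz)$ is the connecting map of $0\to\zz\xrightarrow{p^{m+1}}\zz\to\zz/p^{m+1}\zz\to0$ and $(b)\in\HH^0(G,\mg{F_{\mathsf{sep}}})$ is the class of $b$. Then $p\cdot\delta(\wt\chi)=\delta(p\wt\chi)$, and a short computation with cochains identifies $\delta(p\wt\chi)$ with the connecting image of $\chi$ along $0\to\zz\xrightarrow{p^m}\zz\to\zz/p^m\zz\to0$, because a $\zz$-valued set-theoretic lift of $\wt\chi$ also reduces modulo $p^m$ to $\chi$; cupping with $(b)$ gives $p\cdot[\wt K/F,\wt\s,b)=[K/F,\s,b)$. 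Alternatively, both facts --- hence the whole theorem --- can be extracted from Artin--Schreier--Witt theory together with Kato's description of $\HH^2_{p^e}(F)$ by symbols in characteristic $p$.
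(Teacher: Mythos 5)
Your argument is correct, and it is worth noting that the paper itself gives no proof of this theorem at all---it simply cites Chapter VII, \S 9 of Albert's \emph{Structure of Algebras}---so any self-contained proof is by definition a different route. What makes your write-up interesting in the context of this paper is that it is precisely the characteristic-$p$ twin of the machinery the authors develop in Section~2 for the tame case. Your two ``facts'' are exactly the ingredients of \Cref{P:cylce-lift}: liftability of cyclic $p^m$-extensions into cyclic $p^{m+1}$-extensions, and the relation $p\cdot[\wt K/F,\wt\s,b)=[K/F,\s,b)$, the latter being precisely the special case $m=1$ of \Cref{Albert-powers of cyclic alg} (which holds in every characteristic, so you could simply cite it instead of re-deriving it via the connecting-map diagram; a one-line appeal to naturality of $\delta$ for the morphism of sequences $0\to\zz\xrightarrow{p^{m+1}}\zz\to\zz/p^{m+1}\zz\to0$ over $0\to\zz\xrightarrow{p^{m}}\zz\to\zz/p^{m}\zz\to0$ is also cleaner than the cochain computation you sketch). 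The essential difference between the two settings is where the lifting comes from: with a primitive $p^e$-th root of unity present, the authors must verify Albert's norm condition (\Cref{Albert-extend-cyclic-when-p-th-root-inF}, \Cref{lifting-cyclic-deg p-when-p^e-th-root-inF}); in characteristic $p$ the lifting is unconditional because $\mathrm{cd}_p$ of the absolute Galois group is at most $1$, so $\HH^2(G,\zz/p\zz)=0$ and every character lifts. In exchange, the base case $e=1$ in the tame setting is the deep Merkurjev--Suslin theorem, whereas in characteristic $p$ it is the comparatively elementary Teichm\"uller/Albert result you invoke. Your induction shape (write $p\alpha$ as a sum of cyclic classes by the inductive hypothesis, lift each to degree one higher, subtract, and land in $\Br_p(F)$) is literally the argument of \Cref{T:p} and \Cref{suff-condit-for-p-primary-gen-by-cyclic}, transplanted to characteristic $p$. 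So the proof is correct and fills a gap the paper chose to outsource, and it does so in a way that illuminates the parallel the authors are implicitly exploiting.
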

\begin{proof}
    See \cite[Chapter VII, Section 9]{Alb39}.
\end{proof}

\begin{thm}[Merkurjev-Suslin]\label{Merkur-Suslin-n-torsion-Br-cyclic-gen}
    Let $n\in\nat^+$ and assume that $F$ contains a primitive $n$-th root of unity. 
    Then $\Br_n(F)$ is generated by the classes of cyclic $F$-algebras of degree dividing $n$.
\end{thm}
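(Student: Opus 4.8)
The statement is, via Kummer theory, a reformulation of the surjectivity part of the Merkurjev--Suslin theorem on the norm residue homomorphism, and the plan is to make this translation explicit and then invoke that theorem as the one genuinely deep ingredient. To begin, I would fix a primitive $n$-th root of unity $\zeta\in F$ (legitimate since $\cchar F\nmid n$); it furnishes an isomorphism of $\Gal(F_{\mathrm{sep}}/F)$-modules $\mu_n\cong\zz/n\zz$, and hence also $\mu_n^{\otimes 2}\cong\mu_n$. The Kummer sequence $1\to\mu_n\to\mathbb{G}_m\xrightarrow{n}\mathbb{G}_m\to 1$, together with Hilbert's Theorem~90 (giving $\HH^1(F,\mathbb{G}_m)=0$) and with $\HH^2(F,\mathbb{G}_m)=\Br(F)$, yields a canonical isomorphism $\HH^2(F,\mu_n)\cong\Br_n(F)$, and via $\zeta$ therefore also $\HH^2(F,\mu_n^{\otimes 2})\cong\Br_n(F)$; similarly $\HH^1(F,\mu_n)\cong F^{\times}/F^{\times n}$.

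Next I would recall how symbols translate into cyclic algebras under these identifications. For $a,b\in F^{\times}$, the cup product of the Kummer classes of $a$ and $b$ lies in $\HH^2(F,\mu_n^{\otimes 2})$ and corresponds, under the isomorphism above, to the Brauer class of the symbol algebra $A_\zeta(a,b)$: the central simple $F$-algebra of degree $n$ generated by elements $u,v$ with $u^n=a$, $v^n=b$ and $vu=\zeta uv$. If $X^n-a$ is irreducible over $F$, then $F[u]$ is the field $F(\sqrt[n]{a})$, a cyclic extension of $F$ of degree $n$ (because $\zeta\in F$), and a dimension count identifies $A_\zeta(a,b)$ with $[F(\sqrt[n]{a})/F,\sigma,b)$, where $\sigma$ is the generator of $\Gal(F(\sqrt[n]{a})/F)$ with $\sigma(u)=\zeta^{-1}u$; in general $A_\zeta(a,b)$ is Brauer-equivalent to a cyclic $F$-algebra of degree dividing $n$ (see \cite{Draxl}). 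Hence, as soon as we know that every class in $\HH^2(F,\mu_n^{\otimes 2})$ is a finite sum of symbols, it follows that every class in $\Br_n(F)$ is a sum of classes of cyclic $F$-algebras of degree dividing $n$, which is exactly what the theorem asserts.

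It therefore remains to show that $\HH^2(F,\mu_n^{\otimes 2})$ is generated, as an abelian group, by symbols --- equivalently, that the norm residue homomorphism $\mathrm{K}_2(F)/n\,\mathrm{K}_2(F)\to\HH^2(F,\mu_n^{\otimes 2})$ is surjective. This is precisely the theorem of Merkurjev and Suslin (Merkurjev having earlier treated the case $n=2$, in the form that $\Br_2(F)$ is generated by classes of quaternion algebras), and here is where the argument becomes genuinely hard and where I would have to stop: its proof is not elementary, proceeding through algebraic $K$-theory and in particular the $K$-cohomology of Severi--Brauer varieties, so in this paper I would simply cite it. One may, by the Primary Decomposition Theorem, first reduce to the case where $n$ is a prime power --- since a cyclic algebra whose degree divides a $p$-primary part of $n$ is in particular a cyclic algebra of degree dividing $n$ --- but this brings no real simplification, as the prime-power case still carries the full strength of the Merkurjev--Suslin theorem.
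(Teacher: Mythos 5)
Your proposal is correct and takes essentially the same route as the paper, which simply cites Merkurjev--Suslin \cite{MS82}; you additionally spell out the Kummer-theoretic translation between the surjectivity of the norm residue map and the Brauer-group statement, but the sole substantive ingredient in both cases is the cited theorem.
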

\begin{proof}
    See \cite{MS82}.
\end{proof}

If $F$ contains a primitive $n$-th root of unity then $\cchar F$ does not divide $n$.
Hence the hypotheses of \Cref{Albert-p-primary-torsion-Br-cyclic gen} and \Cref{Merkur-Suslin-n-torsion-Br-cyclic-gen} are mutually exclusive.

For $n=2$, \Cref{Merkur-Suslin-n-torsion-Br-cyclic-gen} was obtained by Merkurjev in \cite{Merk81}. 
Note that the hypothesis of \Cref{Merkur-Suslin-n-torsion-Br-cyclic-gen} for $n=2$ just means that $\cchar F\neq 2$.
Together with \Cref{Albert-p-primary-torsion-Br-cyclic gen} this gives an unconditional positive answer to \Cref{p-primary-torsion-gen-by-cyclic?} for $n=2$.

It was observed in \cite[Proposition 16.6]{MS82} that from the positive answer to \Cref{p-primary-torsion-gen-by-cyclic?} in the (highly nontrivial) case $n=2$ one obtains (rather easily) an unconditional positive answer for $n=4$. 
In \Cref{C:Br4genbycycles}, we obtain a different argument for this step.

Whenever we have a positive answer to \Cref{p-primary-torsion-gen-by-cyclic?}, it is motivated to look at quantitative aspects of the problem. In the first place, this concerns the number of cyclic algebras  needed for a tensor product representing a class in $\Br(F)$ of given exponent. 
This leads to the notion and the study of \emph{symbol lengths}.

For a central simple $F$-algebra $A$, the \emph{$n$-symbol length of} $A$, denoted by $\lambda_n(A)$, is the smallest $m\in\nat^+$ such that $A$ is Brauer equivalent to a tensor product of $m$ cyclic algebras of degree dividing $n$, if such an integer $m$ exists, otherwise we set $\lambda_n(A)=\infty$.
The \emph{$n$-symbol length of} $F$ is defined as 
$$\lambda_n(F)\,\,\,=\,\,\,\sup\{\lambda_n(A)\mid [A]\in\Br_n(F)\}\,\,\in\,\,\nat^+\cup\{\infty\}.$$
Note that the index of any central simple $F$-algebra of exponent $n$ is at most $n^{\lambda_n(F)}$.

Let $p$ be a prime number.
It seems plausible to take the $p$-symbol length of $F$ for a measure for the complexity of the whole $p$-primary part of the theory of central simple algebras over $F$.
So in particular one might expect that $\lambda_{p^e}(F)$ can be bounded in terms of $\lambda_p(F)$ for all $e\in\nat^+$. When $F$ contains a primitive $p^e$-th root of unity, it follows from \cite[Proposition 2.5]{Tig84} that $\lambda_{p^e}(F)\leq e\lambda_p(F)$, but 
in general, this problem is still open.

 In this article, we consider the following question.

\begin{qu}\label{exp-ind-rel-without-primitive root}
    Let $e\in\nat^+$. 
    Can one bound the index of a central simple $F$-algebra of exponent $p^e$ in terms of $e$ and $\lambda_p(F)$?
\end{qu}

This is obviously true when $e=1$.
In the case where $F$ contains a primitive $p^e$-th root of unity, one can distill from the proof of \cite[Proposition 2.5]{Tig84} an argument showing 
that the index of any central simple $F$-algebra of exponent $p^e$ is bounded by $p^{\frac{e(e+1)}2\lambda_p(F)}$.
We retrieve this bound in \Cref{T:p} by means of a lifting argument formulated in \Cref{P:cylce-lift}.

In \Cref{section: squares-in-brauer group}, we consider the case where $p^e=4$ and make no assumption on roots of unity.
For a nonreal field $F$, we obtain in \Cref{bounds-ind-u-inv} an upper bound on the index of exponent-$4$ algebras in terms of the $u$-invariant of $F$.

\Cref{section:u6} is devoted to the construction of examples of nonreal fields with given $u$-invariant  admitting a central simple algebra of given $2$-primary exponent and of comparatively large index; see \Cref{um-exp2^e-deg2^(me-1)}.
If $F$ is nonreal and $u(F)=4$, then by \Cref{bounds-ind-u-inv} the index of a central simple $F$-algebra of exponent $4$ is at most $8$, and we see in \Cref{examples-um-e-me-1} that this is optimal.
This example provides at the same time quadratic field extensions $K/F$ with $u(F)=4$ and $u(K)=6$; see \Cref{example-K/F-u6}.
Hence \Cref{section:u6} provides also an alternative construction of fields of $u$-invariant $6$.

\section{Multiplication by a power of $p$ in the Brauer group}\label{section:p-powers in Br(F)} 

For a finite field extension $K/F$, let $\N_{K/F}:K\to F$ denote the norm map.

\begin{thm}\label{Albert-extend-cyclic-when-p-th-root-inF}
    Let $\zeta\in F$ be a primitive $p$-th root of unity.
    Let $K/F$ be a cyclic field extension of degree $p^{e-1}$.
    Then $K/F$ embeds into a cyclic field extension of degree $p^{e}$ of $F$ if and only if $\zeta=\N_{K/F}(x)$ for some $x\in K$.
\end{thm}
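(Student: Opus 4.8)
The plan is to prove both implications by elementary Galois theory: Kummer theory for the degree-$p$ top layer, and Hilbert's Theorem~90 for the cyclic extension $K/F$, both being available since $\zeta\in F$. Write $m=p^{e-1}$ and fix a generator $\s$ of $\Gal(K/F)$; recall that $e\geq 2$, so $p\mid m$. The bridge between the group-theoretic and the arithmetic side is the telescoping identity: if $K\subseteq L$, $\alpha\in L$ with $\alpha^p=a\in\mg{K}$, $c\in\mg{K}$, and $\tau\in\Aut(L)$ satisfies $\tau|_K=\s$ and $\tau(\alpha)=c\alpha$, then $\tau^m(\alpha)=\bigl(\prod_{i=0}^{m-1}\s^i(c)\bigr)\alpha=\N_{K/F}(c)\,\alpha$. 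This identity will be used in both directions. (One may also note that $\zeta\in\N_{K/F}(\mg{K})$ is equivalent to the splitting of the cyclic algebra $[K/F,\s,\zeta)$, which is why a norm criterion is natural in the present context.)

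For ``$\Leftarrow$'', suppose $\zeta=\N_{K/F}(c)$ with $c\in\mg{K}$. Then $\N_{K/F}(c^p)=\zeta^p=1$, so Hilbert~90 yields $a\in\mg{K}$ with $\s(a)=c^p a$. First, $a$ is not a $p$-th power in $K$: if $a=d^p$, then $(\s(d)/d)^p=c^p$, whence $\s(d)/d=\zeta^j c$ for some $j$, and taking $\N_{K/F}$ gives $1=\zeta^{jm+1}=\zeta$ (as $p\mid m$), which is absurd. Hence $X^p-a$ is irreducible over $K$, and $L:=K(\alpha)$ with $\alpha^p=a$ is a field with $[L:K]=p$. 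Since $(c\alpha)^p=c^p a=\s(a)$, the rule $\alpha\mapsto c\alpha$ extends $\s$ to an automorphism $\tau$ of $L$, and the telescoping identity gives $\tau^m(\alpha)=\N_{K/F}(c)\,\alpha=\zeta\alpha\neq\alpha$. Thus $\la\tau\ra$ has order $>m$; using $\tau^{mk}(\alpha)=\zeta^k\alpha$ and $\tau^{mp}=\id_L$, one checks that its order is exactly $mp=p^e=[L:F]$. Therefore $L/F$ is Galois with cyclic group $\la\tau\ra$, and $K\subseteq L$.

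For ``$\Rightarrow$'', suppose $K\subseteq L$ with $L/F$ cyclic of degree $p^e$. Then $K$ is the unique intermediate field of degree $m$, so $\Gal(L/K)$ is the subgroup of order $p$ of $\Gal(L/F)$. Choose a generator $\tau$ of $\Gal(L/F)$; after replacing it by a suitable power coprime to $p$ we may assume $\tau|_K=\s$. By Kummer theory (using $\zeta\in K$), $L=K(\alpha)$ with $\alpha^p=a\in\mg{K}$ not a $p$-th power, and, adjusting $a$ by a suitable power, we may arrange $\tau^m(\alpha)=\zeta\alpha$. Since $\s(a)=\tau(\alpha)^p$ is a $p$-th power in $L$, the easy direction of Kummer theory forces $\s(a)\equiv a^i\pmod{(\mg{K})^p}$ for some $i\in\{0,\dots,p-1\}$; here $i\neq 0$ (otherwise $a=\s^{-1}(\s(a))$ would be a $p$-th power), and iterating $\s$ shows $a^{i^m-1}\in(\mg{K})^p$, hence $i^m\equiv1\pmod p$. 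Since $\gcd(m,p-1)=1$ this gives $i=1$, i.e.\ $\s(a)=ac^p$ for some $c\in\mg{K}$. Multiplying $c$ by a suitable power of $\zeta$, we may take $\tau(\alpha)=c\alpha$, and the telescoping identity then gives $\N_{K/F}(c)\,\alpha=\tau^m(\alpha)=\zeta\alpha$, so $\zeta=\N_{K/F}(c)$.

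The step I expect to be the main obstacle is establishing $i=1$ in the ``$\Rightarrow$'' direction — that $\s$ fixes $a$ modulo $p$-th powers and does not merely scale it — which is precisely where $\gcd(p^{e-1},p-1)=1$ enters; dually, in ``$\Leftarrow$'', the point that the element $a$ furnished by Hilbert~90 is automatically not a $p$-th power rests on $p\mid p^{e-1}$. The remaining steps are routine manipulations with $p$-th roots of unity and the telescoping identity.
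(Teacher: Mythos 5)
The paper gives no proof of this statement; it simply cites Albert's \emph{Modern Higher Algebra}, Theorem~9.11. Your proof supplies a complete, self-contained argument via Kummer theory (for the top degree-$p$ layer, available because $\zeta\in F$) and Hilbert's Theorem~90 (for the cyclic layer $K/F$), tied together by the telescoping identity $\tau^m(\alpha)=\N_{K/F}(c)\,\alpha$. I checked the details and the argument is correct. In particular: the deduction that $a$ is not a $p$-th power in the ``$\Leftarrow$'' direction correctly relies on $p\mid m$ so that $\N_{K/F}(\zeta^j)=1$; the step $\s(a)\equiv a^i\bmod(\mg K)^p$ in the ``$\Rightarrow$'' direction is legitimate Kummer theory (the class of $a$ generates the Kummer group of $L/K$, and $\s(a)=\tau(\alpha)^p$ lies in $(\mg L)^p\cap\mg K$); and the conclusion $i=1$ from $i^m\equiv 1\bmod p$ and $\gcd(p^{e-1},p-1)=1$ is sound. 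The normalisations (choosing $\tau$ with $\tau|_K=\s$, arranging $\tau^m(\alpha)=\zeta\alpha$, and absorbing a $p$-th root of unity into $c$) are all carried out correctly. One caveat worth recording explicitly: as you note, the argument needs $e\geq 2$ (so that $p\mid m$); for $e=1$, $K=F$ and the claimed equivalence is vacuously true on the norm side but false in general on the embedding side (take $F=\mathbb{C}$), so this restriction is in fact necessary for the statement, not just for your proof. Since the paper outsources the proof to Albert, your write-up is a genuine addition rather than a rederivation of the paper's own argument; the method is presumably the same classical one Albert used, but you should not be penalised for not matching a proof the paper doesn't give.
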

\begin{proof}
    See \cite[Theorem 9.11]{Albert1937}.
\end{proof}

Let $A$ and $B$ be central simple $F$-algebras.
We write $A\sim B$ to indicate that $A$ and $B$ are Brauer equivalent.
For $n\in\nat^+$ we denote by $A^{\otimes n}$ the $n$-fold tensor product $A\otimes_F\ldots\otimes_FA$.

\begin{thm}[Albert]\label{Albert-powers of cyclic alg}
    Let $n,m\in\nat$ with $m\leq n$ and $b\in \mg F$. Let $L/F$ be a cyclic field extension of degree $p^n$ and let $\sigma$ be a generator of its Galois group.
    Let $K$ be the fixed field of $\s^{p^{n-m}}$ in $L$.
    Then $$[L/F,\sigma,b)^{\otimes p^{m}}\sim[K/F,\sigma|_K,b).$$
\end{thm}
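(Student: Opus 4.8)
The plan is to compute $[L/F,\sigma,b)^{\otimes p^m}$ directly inside a matrix algebra, using the standard presentation. Write $A=[L/F,\sigma,b)$ with generator $j$ satisfying $j^{p^n}=b$ and $xj=j\sigma(x)$ for $x\in L$. Inside $A^{\otimes p^m}$, consider the subalgebra generated by $L$ (embedded diagonally) and the element $J=j\otimes\cdots\otimes j$ ($p^m$ factors). One checks that $xJ=J\sigma(x)$ for $x\in L$ and $J^{p^n}=b^{p^m}$, and more relevantly one would like to identify $J^{p^{n-m}}$; but the cleanest route is different. Instead, I would use the well-known description of powers of symbol-like algebras: when $\zeta_{p^n}$-roots of unity are available one simply has $(a,b)^{\otimes p^m}_{\zeta}\sim(a^{p^m},b)_{\zeta}$-type identities; here there is no root of unity, so we argue via the crossed product structure.

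Concretely, I would proceed as follows. Recall $A=[L/F,\sigma,b)$ is the crossed product $(L/F,\sigma,b)$ in the sense of cocycles: with $G=\Gal(L/F)=\langle\sigma\rangle$ of order $p^n$, $A$ corresponds to the cocycle $c_\sigma\in\HH^2(G,\mg L)$ whose class is the image of $b$ under $\mg F/\N_{L/F}(\mg L)\cong\HH^2(G,\mg L)$. The operation $A\mapsto A^{\otimes p^m}$ corresponds to multiplying the Brauer class by $p^m$, hence to the cocycle class of $b^{p^m}$. Now I would invoke the compatibility of this description with restriction/corestriction along the tower $F\subseteq K\subseteq L$, where $K=L^{\sigma^{p^{n-m}}}$ has $\Gal(K/F)=\langle\bar\sigma\rangle$ of order $p^m$: the inflation–restriction machinery, together with the fact that $b^{p^m}=\N_{L/K}(b)$ since $b\in\mg F$ and $[L:K]=p^{n-m}$, shows that the class of $b^{p^m}$ in $\HH^2(\Gal(L/F),\mg L)$ is inflated from the class of $b$ in $\HH^2(\Gal(K/F),\mg K)$. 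That inflation corresponds exactly to the Brauer class of $[K/F,\sigma|_K,b)$, giving the claimed equivalence.

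The main obstacle I anticipate is making the cocycle bookkeeping honest without over-relying on black boxes: one must pin down precisely which cohomology map sends $[b]\in\mg F/\N_{L/F}(\mg L)$ to $[b]\in\mg F/\N_{K/F}(\mg K)$, check it is the one induced by the quotient $\Gal(L/F)\twoheadrightarrow\Gal(K/F)$ on the bottom, and verify that this quotient map on $\HH^2$ is inflation (not restriction) and that inflation on Brauer classes of crossed products is the map $[L/F,\s,b)\leftarrow[K/F,\s|_K,b)$ — equivalently, that the cyclic algebra $[K/F,\s|_K,b)$ becomes Brauer equivalent to $[L/F,\s,b)^{\otimes p^m}$ after one matches the two presentations. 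A more hands-on alternative, which I would present if the cohomological argument feels heavy, is the explicit one: exhibit an embedding of $[K/F,\s|_K,b)\otimes_F\matr{p^{m(?)}}(F)$-type algebra into $A^{\otimes p^m}$ by sending the generator $j'$ of $[K/F,\s|_K,b)$ (with $(j')^{p^m}=b$) to $J=j^{\otimes p^m}$, noting $(j')^{p^m}=b$ matches $J^{p^m}=b^{p^m}\cdot(\text{unit})$ after rescaling, and then counting dimensions to conclude Brauer equivalence by the double centralizer theorem.

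Either way, the key steps in order are: (1) fix the crossed-product presentations of $[L/F,\s,b)$ and $[K/F,\s|_K,b)$ and the surjection $\Gal(L/F)\to\Gal(K/F)$ with kernel generated by $\s^{p^{n-m}}$; (2) identify $A^{\otimes p^m}$ with the crossed product attached to the cocycle class of $b^{p^m}$; (3) use $b\in\mg F$ together with $[L:K]=p^{n-m}$ to rewrite $b^{p^m}=\N_{L/K}(b^{?})$ appropriately and recognize this class as the inflation of the class of $b$ over $K/F$; (4) conclude that $A^{\otimes p^m}\sim[K/F,\s|_K,b)$. The heart of the matter — and the step I expect to spend the most care on — is step (3): the arithmetic identity relating $b^{p^m}\in\mg F$ to norms and the functoriality of the $\mg F/\N\cong\HH^2$ isomorphism under the field tower $F\subseteq K\subseteq L$.
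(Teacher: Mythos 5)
The paper gives no argument for this statement; the ``proof'' is a citation to \cite[Theorem 7.14]{Alb39}, where Albert computes directly inside the crossed product. Your cohomological route is therefore genuinely different, and the idea can be made to work, but as presented there is a real gap in addition to some slips.

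The slips: $[L:K]=p^m$, not $p^{n-m}$. The subgroup $\langle\sigma^{p^{n-m}}\rangle$ of $\Gal(L/F)$ has order $p^n/p^{n-m}=p^m$, so $[L:K]=p^m$ and $[K:F]=p^{n-m}$. Your desired identity $\N_{L/K}(b)=b^{p^m}$ for $b\in\mg F$ is correct, but the reason you give for it ($[L:K]=p^{n-m}$) contradicts it. In the paragraph listing obstacles you also describe the map you need to pin down as one sending $[b]\in\mg F/\N_{L/F}(\mg L)$ to $[b]\in\mg F/\N_{K/F}(\mg K)$; inflation runs the other way, $\HH^2(\Gal(K/F),\mg K)\to\HH^2(\Gal(L/F),\mg L)$, and the point is precisely that it does \emph{not} preserve the representative: on $\mg F/\N$ it must become $[b]\mapsto[b^{p^m}]$.

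The gap is exactly your step (3), and it is not a matter of ``inflation--restriction machinery'' (that exact sequence controls kernels, not the explicit periodicity isomorphisms). The assertion that inflation, transported through $\HH^2(\Gal(K/F),\mg K)\cong\mg F/\N_{K/F}(\mg K)$ and $\HH^2(\Gal(L/F),\mg L)\cong\mg F/\N_{L/F}(\mg L)$, becomes $[b]\mapsto[b^{p^m}]$ is essentially equivalent to the theorem itself, and you have deferred it rather than proved it. A clean way to close the gap is to avoid the $\mg F/\N$ description and use characters: let $\chi_L,\chi_K\in\HH^1(F,\mathbb{Q}/\zz)$ be the characters factoring through $\Gal(L/F)$ and $\Gal(K/F)$ respectively, with $\chi_L(\sigma)=1/p^n$ and $\chi_K(\sigma|_K)=1/p^{n-m}$, so that $[L/F,\sigma,c)=\chi_L\cup(c)$ and $[K/F,\sigma|_K,c)=\chi_K\cup(c)$ in $\Br(F)$. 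One checks directly that $p^m\chi_L$ has kernel $\Gal(L/K)$ and sends $\sigma\mapsto p^m/p^n=1/p^{n-m}$, hence $\chi_K=p^m\chi_L$ in $\HH^1(F,\mathbb{Q}/\zz)$; bilinearity of cup product then gives $[K/F,\sigma|_K,b)=\chi_K\cup(b)=p^m\bigl(\chi_L\cup(b)\bigr)$, which is $[L/F,\sigma,b)^{\otimes p^m}$. Finally, your ``hands-on'' alternative also needs repair: $L\ni x\mapsto x\otimes1\otimes\cdots\otimes1$ and $J=j^{\otimes p^m}$ generate a copy of $[L/F,\sigma,b^{p^m})$ inside $A^{\otimes p^m}$ (so $A^{\otimes p^m}\sim[L/F,\sigma,b^{p^m})$, which is obvious anyway), whereas the map you propose, sending the degree-$p^{n-m}$ generator $j'$ of $[K/F,\sigma|_K,b)$ to $J$, cannot extend to an algebra homomorphism: $J^{p^{n-m}}=(j^{p^{n-m}})^{\otimes p^m}$ is not a scalar, while $(j')^{p^{n-m}}=b$.
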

\begin{proof}
    See \cite[Theorem 7.14]{Alb39}.
\end{proof}

\begin{cor}\label{lifting-cyclic-when-pth-root-in-F}
    Let $\zeta\in F$ be a primitive $p$-th root of unity.
    Let $e\in\nat^+$. For $\alpha\in\Br(F)$, the following are equivalent: 
\begin{enumerate}[$(i)$]
    \item $\alpha$ is the class of a cyclic $F$-algebra of degree $p^{e-1}$ containing a cyclic field extension $K/F$ of degree $p^{e-1}$ such that $\zeta=\N_{K/F}(x)$ for some $x\in K$.
    \item $\alpha=p\beta$ for the class $\beta\in\Br (F)$ of a cyclic $F$-algebra of degree $p^{e}$.
\end{enumerate}	
\end{cor}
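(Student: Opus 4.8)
The plan is to deduce both implications directly from \Cref{Albert-extend-cyclic-when-p-th-root-inF} together with \Cref{Albert-powers of cyclic alg}, the latter applied with $(n,m)=(e,1)$: if $L/F$ is cyclic of degree $p^e$ with a generator $\sigma$ of its Galois group and $b\in\mg F$, then the $p$-th tensor power of $[L/F,\sigma,b)$ is Brauer equivalent to $[K/F,\sigma|_K,b)$, where $K$ denotes the fixed field of $\sigma^{p^{e-1}}$ in $L$. Note that $K$ is the unique intermediate field of $L/F$ of degree $p^{e-1}$ (because $\Gal(L/F)$ is cyclic), and that every generator of $\Gal(L/F)$ restricts to a generator of $\Gal(K/F)$.

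For \follows{i}{ii}, I would fix a generator $\tau$ of $\Gal(K/F)$ and write the given degree-$p^{e-1}$ algebra as $[K/F,\tau,b)$ with $b\in\mg F$. As $\zeta=\N_{K/F}(x)$ for some $x\in K$, \Cref{Albert-extend-cyclic-when-p-th-root-inF} supplies a cyclic field extension $L/F$ of degree $p^e$ containing $K$. Replacing a generator of $\Gal(L/F)$ by a suitable power $\sigma^j$ with $\gcd(j,p)=1$, one may assume that the generator $\sigma$ satisfies $\sigma|_K=\tau$. Let $\beta$ be the class of the cyclic $F$-algebra $[L/F,\sigma,b)$ of degree $p^e$. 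Then \Cref{Albert-powers of cyclic alg} gives $[L/F,\sigma,b)^{\otimes p}\sim[K/F,\tau,b)$, so $p\beta=\alpha$.

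For \follows{ii}{i}, I would write $\beta$ as the class of a cyclic $F$-algebra $[L/F,\sigma,b)$ of degree $p^e$, where $L/F$ is cyclic of degree $p^e$, $\sigma$ generates $\Gal(L/F)$, and $b\in\mg F$, and set $K$ to be the fixed field of $\sigma^{p^{e-1}}$ in $L$. Then $K/F$ is cyclic of degree $p^{e-1}$, and by \Cref{Albert-powers of cyclic alg} the class $\alpha=p\beta$ is represented by the cyclic $F$-algebra $[K/F,\sigma|_K,b)$ of degree $p^{e-1}$, which contains the cyclic field extension $K/F$ of degree $p^{e-1}$. Since this $K/F$ embeds into the cyclic field extension $L/F$ of degree $p^e$, \Cref{Albert-extend-cyclic-when-p-th-root-inF} yields $\zeta=\N_{K/F}(x)$ for some $x\in K$, as required.

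I expect the only genuinely delicate point to be the matching of Galois generators in \follows{i}{ii}: one must arrange that the generator of $\Gal(L/F)$ used to present $\beta$ restricts on $K$ to the prescribed generator $\tau$, so that \Cref{Albert-powers of cyclic alg} applies to the presentation $[K/F,\tau,b)$ of $\alpha$. The remaining ingredients — the order count identifying $K$ with the fixed field of $\sigma^{p^{e-1}}$, and the standard passage between central simple algebras and their Brauer classes — are routine.
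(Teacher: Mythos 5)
Your proposal is correct and takes essentially the same route as the paper: both implications are derived from \Cref{Albert-extend-cyclic-when-p-th-root-inF} combined with \Cref{Albert-powers of cyclic alg}, and in \follows{ii}{i} you identify $K$ as the fixed field of $\sigma^{p^{e-1}}$ just as the paper does. The only cosmetic difference is in the generator-matching step of \follows{i}{ii}: you start from an arbitrary generator of $\Gal(L/F)$ and adjust it by a power coprime to $p$, whereas the paper extends the given generator $\tau$ of $\Gal(K/F)$ to an $F$-automorphism $\sigma'$ of $L$ and observes that $\sigma'$ must already generate $\Gal(L/F)$; these are equivalent ways to arrange $\sigma|_K=\tau$.
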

\begin{proof}
    \follows{i}{ii}
    Assume that $K/F$ is  a cyclic field extension of degree $p^{e-1}$, $\s$ a generator of its Galois group and $b\in\mg F$ is such that $\alpha$ is represented by $[K/F,\sigma,b)$. 
    Assume further that $\zeta=\N_{K/F}(x)$ for some $x\in K$.
    By \Cref{Albert-extend-cyclic-when-p-th-root-inF}, there exists a field extension $L/K$ of degree $p$  such that $L/F$ is cyclic.
    Then $\s$ extends to an $F$-automorphism $\s'$ of $L$, and it follows that $\s'$ generates the Galois group of $L/F$.
    Let $\beta$ be the class of the cyclic $F$-algebra $[L/F,\sigma',b)$. 
    Since $[L:K]=p$ and $\s'|_K=\s$, we conclude by \Cref{Albert-powers of cyclic alg} that $p\beta=\alpha$. 

    \follows{ii}{i} Assume that $\alpha=p\beta$ where $\beta\in \Br (F)$ is the class of cyclic $F$-algebra of degree $p^{e}$. 
    Then $\beta$ is given by $[L/F,\sigma,b)$ for some cyclic field extension $L/F$ of degree $p^e$, a generator $\sigma$ of its Galois group and some $b\in F^{\times}$.
    Let $K$ denote the fixed field of $\s^{p^{e-1}}$ in $L$. 
    Then $K/F$ is cyclic of degree $p^{e-1}$, and we obtain by \Cref{Albert-extend-cyclic-when-p-th-root-inF} that $\zeta=\N_{K/F}(x)$ for some $x\in K$.
    By \Cref{Albert-powers of cyclic alg}, we have $[L/F,\sigma,b)^{\otimes p}\sim[K/F,\sigma|_K,b)$. 
    Hence $\alpha$ is given by $[K/F,\sigma|_K,b)$.
\end{proof}

Given a central simple $F$-algebra $A$, we denote by $\deg A$, $\ind A$ and $\exp A$, the degree, index and exponent of $A$, respectively.
For $\alpha\in\Br(F)$, we write $\ind\alpha$ and $\exp\alpha$ for the index and the exponent of any central simple $F$-algebra representing $\alpha$.

Given a field extension $F'/F$ and $\alpha\in\Br(F)$ we denote by $\alpha_{F'}$ the image of $\alpha$ under the natural map $\Br(F)\to \Br(F')$ induced by scalar extension.

Let $m\in\nat^+$. We call $\alpha\in\Br(F)$ an $m$-\emph{cycle} if $\exp\alpha=m=[K:F]$ for some cyclic field extension $K/F$ for which $\alpha_K=0$.
Hence, given a central $F$-division algebra $D$, the class of $D$ in $\Br(F)$ is an $m$-cycle if and only if $D$ is cyclic and $\exp D=\deg D=m$.
    
\begin{prop}\label{P:cylce-lift}
    Let $e,i\in\nat^+$ with $i\leq e$ and such that every cyclic field extension of degree $p^{i}$  of $F$ embeds into a cyclic field extension of degree $p^e$ of~$F$.
    Then every $p^i$-cycle in $\Br(F)$ is of the form $p^{e-i}\beta$ for a $p^e$-cycle $\beta\in\Br(F)$.
\end{prop}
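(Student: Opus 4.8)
The plan is to reduce the statement to Albert's formula for powers of cyclic algebras, \Cref{Albert-powers of cyclic alg}, with the embedding hypothesis supplying the larger cyclic extension. Let $\alpha\in\Br(F)$ be a $p^i$-cycle, so that $\exp\alpha=p^i=[K:F]$ for some cyclic field extension $K/F$ with $\alpha_K=0$. Since $K$ splits $\alpha$, we have $\ind\alpha\mid[K:F]=p^i$, and together with $\exp\alpha\mid\ind\alpha$ this forces $\ind\alpha=p^i$. Hence the underlying division algebra $D$ has degree $p^i$ and contains $K$ as a maximal subfield; as $K/F$ is cyclic, $D$ is a cyclic algebra, so by \cite[Theorem~5.9]{Alb39} we may fix a generator $\sigma$ of $\Gal(K/F)$ and an element $b\in\mg F$ with $D\cong[K/F,\sigma,b)$.

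By hypothesis, $K/F$ embeds into a cyclic field extension $L/F$ of degree $p^e$. The restriction map $\Gal(L/F)\to\Gal(K/F)$ is surjective, so there is an $F$-automorphism $\tau$ of $L$ with $\tau|_K=\sigma$; any such $\tau$ necessarily generates $\Gal(L/F)$, since otherwise $\tau$ would be a $p$-th power in the $p$-group $\Gal(L/F)$, whence $\sigma=\tau|_K$ a $p$-th power in $\Gal(K/F)$, contradicting that $\sigma$ is a generator. The unique subgroup of order $p^{e-i}$ of $\Gal(L/F)=\langle\tau\rangle$ is $\langle\tau^{p^{i}}\rangle=\Gal(L/K)$, so $K$ is the fixed field of $\tau^{p^{i}}$ in $L$. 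Put $\beta:=[L/F,\tau,b)\in\Br(F)$.

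Applying \Cref{Albert-powers of cyclic alg} with $n=e$ and $m=e-i$ (so that $\tau^{p^{n-m}}=\tau^{p^{i}}$ has fixed field $K$) yields $\beta^{\otimes p^{e-i}}\sim[K/F,\tau|_K,b)=[K/F,\sigma,b)\sim D$, that is, $p^{e-i}\beta=\alpha$ in $\Br(F)$. It remains to check that $\beta$ is a $p^e$-cycle. We have $[L:F]=p^e$, and $\beta_L=0$ since $L$ is a maximal subfield of $[L/F,\tau,b)$. Finally $p^e\beta=p^i(p^{e-i}\beta)=p^i\alpha=0$, so $\exp\beta\mid p^e$, while $\exp(p^{e-i}\beta)=\exp\alpha=p^i$ forces $\exp\beta=p^e$ (note $\alpha\neq0$ because $\exp\alpha=p^i\geq p$). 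Hence $\beta$ is a $p^e$-cycle with $p^{e-i}\beta=\alpha$, as required.

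The only genuinely delicate point is the bookkeeping with Galois generators: one must choose the generator $\tau$ of $\Gal(L/F)$ so that its restriction to $K$ is precisely the generator $\sigma$ occurring in the fixed presentation $D\cong[K/F,\sigma,b)$; otherwise \Cref{Albert-powers of cyclic alg} only produces a presentation $[K/F,\sigma^{c},b)$ with $c$ prime to $p$, which one would then have to reconcile with $D$. Beyond that, the argument is a direct application of the cited theorem together with the elementary computation of the order of $p^{e-i}\beta$, so I do not expect any serious obstacle.
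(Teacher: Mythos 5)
Your proposal is correct and follows essentially the same route as the paper's proof: lift the generator $\sigma$ of $\Gal(K/F)$ to a generator $\tau$ of $\Gal(L/F)$, set $\beta=[L/F,\tau,b)$, and invoke \Cref{Albert-powers of cyclic alg} to get $p^{e-i}\beta=\alpha$, then deduce $\exp\beta=p^e$ from $\exp\alpha=p^i$. You fill in a few details the paper leaves implicit (deriving $\ind\alpha=p^i$ from $\alpha_K=0$, why any lift of $\sigma$ must generate $\Gal(L/F)$, and the exact-order computation for $\beta$), but these are elaborations of the same argument rather than a different one.
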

\begin{proof}
    Let $\alpha\in\Br(F)$ be a $p^i$-cycle. Hence $\alpha$ is given by 
    $D=[K/F,\s,b)$ for a cyclic field extension $K/F$ of degree $p^i$, a generator $\s$ of its Galois group and some $b\in\mg F$. 
    In particular $\deg D=p^i=\exp \alpha=\exp D$, whereby $D$ is a division algebra.
    By the hypothesis, $K/F$ embeds into a cyclic field extension $L/F$ of degree $p^e$.
    Then $\s$ extends to an $F$-automorphism $\s'$ of $L$. It follows that $\s'$ is a generator of the Galois group of $L/F$. 
    We set $\Delta=[L/F,\s',b)$ and denote by $\beta$ the class of $\Delta$ in $\Br(F)$.
    We obtain by \Cref{Albert-powers of cyclic alg} that
    $\Delta^{\otimes p^{e-i}}\sim D$, whereby $p^{e-i}\beta=\alpha$. Since $\exp \alpha=p^i$, it follows that $\exp\beta=p^e=\deg\Delta$. 
    Since $\beta_L=0$, we conclude that $\beta$ is a $p^e$-cycle.
\end{proof}

An element $\alpha\in\Br(F)$ is called a \emph{cycle} if it is an $m$-cycle for some  $m\in\nat^+$ (given by $m=\exp \alpha$).

\begin{cor}\label{lifting-cyclic-deg p-when-p^e-th-root-inF}
    Let $e\in\nat^+$ be such that $F$ contains a primitive $p^e$-th root of unity.
    Then every cycle in $\Br_{p^e}(F)$ is a multiple of a $p^e$-cycle.
\end{cor}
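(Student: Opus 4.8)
The plan is to deduce this from \Cref{P:cylce-lift} by checking its embedding hypothesis, which in turn will follow from Albert's criterion \Cref{Albert-extend-cyclic-when-p-th-root-inF}. First I would note that a cycle $\alpha$ in $\Br_{p^e}(F)$ satisfies $\exp\alpha=p^i$ for some $i$ with $0\leq i\leq e$, and $\alpha$ is a $p^i$-cycle; the case $i=0$ is trivial, so one may assume $i\geq 1$. It then suffices to show that every cyclic field extension of degree $p^i$ of $F$ embeds into a cyclic field extension of degree $p^e$ of $F$, for then \Cref{P:cylce-lift}, applied with this $i$ and $e$, yields a $p^e$-cycle $\beta\in\Br(F)$ with $\alpha=p^{e-i}\beta$, so that $\alpha$ is a multiple of a $p^e$-cycle.

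The key step is the following claim: for every $k\in\{0,\ldots,e-1\}$, every cyclic field extension of degree $p^k$ of $F$ embeds into a cyclic field extension of degree $p^{k+1}$ of $F$. To prove it I would fix a primitive $p^e$-th root of unity $\omega\in F$ and set $\zeta=\omega^{p^{e-1}}$, a primitive $p$-th root of unity lying in $F$. Given a cyclic extension $M/F$ of degree $p^k$ with $k\leq e-1$, since $\omega^{p^{e-1-k}}\in F$ we have $\N_{M/F}(\omega^{p^{e-1-k}})=(\omega^{p^{e-1-k}})^{p^k}=\omega^{p^{e-1}}=\zeta$, so $\zeta$ is a norm from $M$; hence by \Cref{Albert-extend-cyclic-when-p-th-root-inF}, $M/F$ embeds into a cyclic extension of degree $p^{k+1}$ of $F$. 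Iterating the claim along a tower starting at degree $p^i$ and ending at degree $p^e$ gives the required embedding statement, which is exactly the hypothesis of \Cref{P:cylce-lift}.

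Assembling these pieces completes the argument. I do not expect a serious obstacle: the only points requiring care are checking that Albert's norm criterion applies uniformly to \emph{every} cyclic subextension occurring in the tower (which is what the elementary computation $\N_{M/F}(\omega^{p^{e-1-k}})=\zeta$ ensures), and the bookkeeping of the iterated embedding so that one lands precisely at degree $p^e$.
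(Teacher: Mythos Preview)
Your proposal is correct and follows essentially the same route as the paper: fix a primitive $p^e$-th root of unity $\omega$, set $\zeta=\omega^{p^{e-1}}$, compute $\N_{M/F}(\omega^{p^{e-1-k}})=\zeta$ for any cyclic extension $M/F$ of degree $p^k$ with $1\leq k\leq e-1$, invoke \Cref{Albert-extend-cyclic-when-p-th-root-inF} to step up one level, iterate to reach degree $p^e$, and then apply \Cref{P:cylce-lift}. The only cosmetic difference is that you state the stepping-up claim for $k\in\{0,\dots,e-1\}$, whereas the paper restricts to $1\leq i\leq e-1$; since you have already disposed of the case $i=0$ separately, the value $k=0$ is never actually used in your iteration.
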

\begin{proof}
    Let $\omega\in F$ be a primitive $p^e$-th root of unity and set $\zeta=\omega^{p^{e-1}}$. 
    Then $\zeta$ is a primitive $p$-th root of unity.
    For any field extension $K/F$ of degree $p^i$ with $1\leq i\leq e-1$, we have that $\zeta=(\omega^{p^{e-i-1}})^{p^{i}}=\N_{K/F}(\omega^{p^{e-i-1}})$. 
    Hence it follows by induction on $i$ from \Cref{Albert-extend-cyclic-when-p-th-root-inF} that every cyclic field extension of degree $p^i$ of $F$ embeds into a cyclic field extension of degree $p^e$. 
    Now the conclusion follows by \Cref{P:cylce-lift}.
\end{proof}

The following bound can be easily derived from the proof of \cite[Proposition 2.5]{Tig84}. To illustrate the general strategy, we include an argument.

\begin{thm}\label{T:p}
    Let $e\in\nat^+$ be such that $F$ contains a primitive $p^e$-th root of unity. 
    Then $\Br_{p^e}(F)$ is generated by the $p^e$-cycles.
    Furthermore,  for every $\alpha\in\Br_{p^e}(F)$, we have  $\ind\alpha=p^n$ for some $n\in\nat^+$ with $$n\leq \hbox{$ \frac{e(e+1)}{2}$}\lambda_p(F)\,.$$ 
\end{thm}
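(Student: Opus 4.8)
The plan is to prove both assertions by induction on $e$, using \Cref{lifting-cyclic-deg p-when-p^e-th-root-inF} together with the reduction that lowering the exponent from $p^e$ to $p^{e-1}$ via multiplication by $p$ costs at most one factor of $p^e$-cycles and, dually, at most $e\lambda_p(F)$ in the index exponent. For $e=1$ both claims are immediate: $\Br_p(F)$ is generated by $p$-cycles by \Cref{Merkur-Suslin-n-torsion-Br-cyclic-gen} (a cyclic algebra of degree dividing $p$ is either split or a $p$-cycle), and if $\alpha$ is a tensor product of at most $\lambda_p(F)$ cyclic algebras of degree $p$, then $\ind\alpha\leq p^{\lambda_p(F)}$, matching the bound $\tfrac{1\cdot2}{2}\lambda_p(F)=\lambda_p(F)$.

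For the inductive step, fix $e\geq 2$ and $\alpha\in\Br_{p^e}(F)$. First I would establish that $\Br_{p^e}(F)$ is generated by $p^e$-cycles: write $\alpha$, via \Cref{Merkur-Suslin-n-torsion-Br-cyclic-gen}, as a sum of classes of cyclic algebras of degree dividing $p^e$; each such summand is a cycle in $\Br_{p^e}(F)$ (it has exponent $p^j$ for some $j\leq e$ and is split by a cyclic extension of that degree), hence by \Cref{lifting-cyclic-deg p-when-p^e-th-root-inF} each summand is a multiple of a $p^e$-cycle, so $\alpha$ itself is a sum of $p^e$-cycles. Next, to bound the index, the key idea is to peel off one layer: consider $p\alpha\in\Br_{p^{e-1}}(F)$. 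I want to compare $\ind\alpha$ with $\ind(p\alpha)$ and a ``degree-$p$ correction term.'' Concretely, write $\alpha = \sum_{k=1}^{N}\gamma_k$ where each $\gamma_k$ is a $p^e$-cycle given by $[L_k/F,\s_k,b_k)$ with $[L_k:F]=p^e$; then $p\alpha=\sum_k p\gamma_k$, and by \Cref{Albert-powers of cyclic alg} each $p\gamma_k$ is represented by $[K_k/F,\s_k|_{K_k},b_k)$ with $[K_k:F]=p^{e-1}$, so $p\alpha\in\Br_{p^{e-1}}(F)$. By the induction hypothesis applied to $p\alpha$, after regrouping we may assume $N\leq\lambda_p(F)$ is not literally what we get, so instead I would argue at the level of a single compositum: let $E/F$ be a cyclic (or abelian, then refine to cyclic) extension of degree $p^{e-1}$ splitting $p\alpha$ with $[E:F]\leq p^{(e-1)e/2\cdot\lambda_p(F)}$ coming from the inductive index bound; then $\alpha_E$ lies in $\Br_p(E)$, and the key inequality $\ind\alpha \mid [E:F]\cdot\ind(\alpha_E)$ reduces matters to bounding $\ind(\alpha_E)$ in $\Br_p(E)$.

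The main obstacle — and the step that needs the most care — is controlling $\ind(\alpha_E)$: a priori $\lambda_p(E)$ need not be bounded by $\lambda_p(F)$, so one cannot simply invoke $\ind(\alpha_E)\leq p^{\lambda_p(E)}$. The fix, which is the heart of the argument distilled from \cite[Proposition 2.5]{Tig84}, is to track the $p$-symbol length of $\alpha$ \emph{over $F$} through the whole construction rather than passing to length invariants of intermediate fields: start with a decomposition $\alpha=\sum_{j=1}^{s}\delta_j$ into $\leq\lambda_p(F)$... — here one must instead note that $\alpha$ has exponent $p^e$ and use \Cref{lifting-cyclic-deg p-when-p^e-th-root-inF} to write $\alpha$ as a sum of $p^e$-cycles, then bound the number of cycles needed by relating it to $\lambda_p(F)$ via iterated restriction. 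Once the bookkeeping is set up so that $e$ successive ``descents by $p$'' each introduce a factor whose exponent contribution is at most $e\lambda_p(F)$, $(e-1)\lambda_p(F)$, \dots, $\lambda_p(F)$ respectively, summing gives exactly $\bigl(e+(e-1)+\cdots+1\bigr)\lambda_p(F)=\tfrac{e(e+1)}{2}\lambda_p(F)$, and since $\alpha$ is split by a tower of cyclic $p$-extensions of $F$, its index is a power of $p$, completing the induction.
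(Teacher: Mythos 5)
Your proposal correctly handles the base case $e=1$ and the qualitative claim that $\Br_{p^e}(F)$ is generated by $p^e$-cycles, and you even arrive at exactly the right arithmetic $e\lambda_p(F)+(e-1)\lambda_p(F)+\cdots+\lambda_p(F)=\tfrac{e(e+1)}{2}\lambda_p(F)$. But the index bound, which is the substantive part, has a genuine gap: your chosen reduction does not work, and you explicitly acknowledge this without supplying a replacement. You propose to consider $p\alpha\in\Br_{p^{e-1}}(F)$, pass to an extension $E/F$ splitting it, and then bound $\ind(\alpha_E)$ in $\Br_p(E)$; you then correctly observe that this founders because nothing controls $\lambda_p(E)$ in terms of $\lambda_p(F)$. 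Your attempted ``fix'' --- tracking the symbol length of $\alpha$ over $F$ through ``iterated restriction'' --- is never made concrete and never resolves the obstruction you identified. As written, the proof stops at an announcement of the bookkeeping one would like to do, not a demonstration that it can be done.

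The step you are missing reverses your choice of which power of $p$ to multiply by. Instead of peeling off one factor of $p$ at a time and descending to an extension field, multiply by $p^{e-1}$ all at once so as to land in $\Br_p(F)$ while \emph{staying over $F$}: by \Cref{Merkur-Suslin-n-torsion-Br-cyclic-gen} and the definition of $\lambda_p(F)$, one has $p^{e-1}\alpha=\sum_{i=1}^m\gamma_i$ with $m\leq\lambda_p(F)$ and each $\gamma_i$ a $p$-cycle. Then \Cref{lifting-cyclic-deg p-when-p^e-th-root-inF} lifts each $\gamma_i$ to a $p^e$-cycle $\beta_i\in\Br(F)$ with $p^{e-1}\beta_i=\gamma_i$ and $\ind\beta_i\leq p^e$. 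Setting $\alpha'=\alpha-\sum_{i=1}^m\beta_i$ gives $p^{e-1}\alpha'=0$, so $\alpha'\in\Br_{p^{e-1}}(F)$, and the induction hypothesis applies to $\alpha'$ over the same field $F$. Since $\ind\alpha$ divides $\ind\alpha'\cdot\prod_i\ind\beta_i\leq p^{n'}\cdot p^{em}$ with $n'\leq\tfrac{(e-1)e}{2}\lambda_p(F)$ and $m\leq\lambda_p(F)$, the claimed bound follows. This is precisely the ``descent by $e$ steps of cost $e\lambda_p, (e-1)\lambda_p,\ldots$'' you wanted, but realized by induction over the exponent while never leaving $F$, which is what makes $\lambda_p(F)$ usable at every stage.
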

\begin{proof}
    Consider $\alpha\in\Br_{p^e}(F)$.
    By induction on $e$ we will show at the same time that $\alpha$ is a sum of $p^e$-cycles and 
    that $\ind\alpha$ is of the claimed form.
    
    We have $p^{e-1}\alpha\in\Br_{p}(F)$.
    It follows by \Cref{Merkur-Suslin-n-torsion-Br-cyclic-gen} for $n=p$ and by the definition of $\lambda_p(F)$ that $p^{e-1}\alpha=\sum_{i=1}^m \gamma_i$ for some
    natural number $m\leq \lambda_p(F)$ and classes $\gamma_1,\dots,\gamma_m\in \Br(F)$ of cyclic $F$-division algebras of degree $p$.
    Then $\gamma_1,\dots,\gamma_m$ are $p$-cycles.
    By \Cref{lifting-cyclic-deg p-when-p^e-th-root-inF}, for $1\leq i\leq m$, we have $\gamma_i=p^{e-1}\beta_i$ for a $p^e$-cycle $\beta_i\in\Br(F)$.
    
    We set  $\alpha'=\alpha-\sum_{i=1}^m\beta_i$. Then $\alpha'\in\Br_{p^{e-1}}(F)$.
    If $e=1$, then $\alpha'=0$ and $\alpha=\sum_{i=1}^m\beta_i$, and we obtain that $\ind\alpha=p^n$ for some positive integer $n\leq m\leq \lambda_p(F)$, confirming the claims about $\alpha$.
    Assume now that $e>1$. 
    By the induction hypothesis, $\alpha'$ is equal to a sum of $p^{e-1}$-cycles and $\ind\alpha'=p^{n'}$ for a natural number $n'\leq \frac{(e-1)e}{2}\lambda_p(F)$.
    By \Cref{lifting-cyclic-deg p-when-p^e-th-root-inF}, every cycle in $\Br_{p^e}(F)$ is a multiple of a $p^e$-cycle, hence in particular a sum of $p^e$-cycles.
    We conclude that $\alpha'$ is a sum of $p^e$-cycles, whereby $\alpha$ is a sum of $p^e$-cycles.
    Furthermore $\ind\alpha$ divides $\ind\alpha'\cdot \ind\beta_1\cdots\ind \beta_m=p^{n'+em}$.
    Hence $\ind\alpha=p^n$ for some positive integer  $$n\leq n'+em\leq \hbox{$\frac{(e-1)e}{2}$}\lambda_p(F)+e\lambda_p(F)=\hbox{$\frac{e(e+1)}{2}$}\lambda_p(F)\,.$$
    This proves the claims about $\alpha$.
\end{proof}

To obtain that $\Br_{p^e}(F)$ is generated by cycles, one can also conclude inductively on the basis of a weaker hypothesis on roots of unity than in \Cref{T:p}.

\begin{prop}\label{suff-condit-for-p-primary-gen-by-cyclic}
    Let $e\in\nat^+$ be such that $p\Br(F)\cap\Br_{p^{e-1}}(F)$ is generated by elements $p\beta$ with cycles $\beta\in\Br_{p^{e}}(F)$.
    Then  $\Br_{p^{e}}(F)$ is generated by cycles.
\end{prop}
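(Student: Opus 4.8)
The plan is to show directly that an arbitrary class $\alpha\in\Br_{p^e}(F)$ lies in the subgroup of $\Br(F)$ generated by cycles, by peeling off a sum of cycles so as to reach a class that is killed by $p$.

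First I would observe that $p\alpha\in p\Br(F)\cap\Br_{p^{e-1}}(F)$: it is visibly in $p\Br(F)$, and $p^{e-1}(p\alpha)=p^e\alpha=0$ puts it in $\Br_{p^{e-1}}(F)$. By hypothesis there are then cycles $\beta_1,\dots,\beta_m\in\Br_{p^e}(F)$ and integers $n_1,\dots,n_m$ with $p\alpha=\sum_{i=1}^m n_ip\beta_i$. Setting $\gamma=\alpha-\sum_{i=1}^m n_i\beta_i$ gives $p\gamma=p\alpha-\sum_{i=1}^m n_ip\beta_i=0$, so $\gamma\in\Br_p(F)$. I would next note that $\sum_{i=1}^m n_i\beta_i$ is itself a sum of cycles: the negative of a cycle is again a cycle, since if $\beta$ is an $m'$-cycle, represented by a cyclic division algebra $D$ with $\exp D=\deg D=m'$, then $-\beta$ is the class of the opposite algebra $D^{\op}$, which is again a cyclic division algebra with $\exp D^{\op}=\deg D^{\op}=m'$; hence each $n_i\beta_i$ is $\beta_i$, respectively $-\beta_i$, added to itself $|n_i|$ times, that is, a sum of cycles. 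Therefore $\alpha=\gamma+\sum_{i=1}^m n_i\beta_i$ will be a sum of cycles as soon as $\gamma$ is.

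This reduces the claim to the assertion that $\Br_p(F)$ is generated by cycles --- the case $n=p$ of \Cref{p-primary-torsion-gen-by-cyclic?} --- which is the one genuine input. It holds when $\cchar F=p$ by \Cref{Albert-p-primary-torsion-Br-cyclic gen}, and when $F$ contains a primitive $p$-th root of unity (in particular for $p=2$ whenever $\cchar F\neq 2$) by \Cref{Merkur-Suslin-n-torsion-Br-cyclic-gen}; this is the weaker hypothesis on roots of unity alluded to just before the proposition, and using it as the base of an induction on $e$ is what produces the inductive argument mentioned there. Accordingly I do not expect a serious obstacle: the argument is elementary once one notices that the displayed hypothesis is exactly what is needed to strip off the sum-of-cycles part and land the remainder $\gamma$ in $\Br_p(F)$ rather than merely in $\Br_{p^{e-1}}(F)$, the only technical wrinkle being the sign bookkeeping, which the opposite-algebra remark dispatches.
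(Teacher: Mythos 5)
Your proof is correct and follows the same route as the paper's: pass to $p\alpha\in p\Br(F)\cap\Br_{p^{e-1}}(F)$, peel off a sum of cycles from $\Br_{p^e}(F)$ using the hypothesis, observe that the remainder lands in $\Br_p(F)$, and conclude by the Merkurjev--Suslin theorem that $\Br_p(F)$ is generated by $p$-cycles. The only cosmetic difference is your treatment of signs: you argue via opposite algebras that $-\beta$ is again a cycle, whereas this is immediate from the definition of cycle used in the paper (if $\exp\beta=m=[K:F]$ and $\beta_K=0$, then $\exp(-\beta)=m$ and $(-\beta)_K=0$, so $-\beta$ is an $m$-cycle via the same extension $K$), and the paper simply absorbs the integer coefficients into a longer sum.
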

\begin{proof}
    Consider $\alpha\in \Br_{p^e}(F)$.
    Then $p\alpha\in p\Br(F)\cap \Br_{p^{e-1}}(F)$, so the hypothesis implies that $p\alpha=\sum_{i=1}^{n}p\beta_i$ for some $n\in\nat$ and cycles $\beta_1,\ldots,\beta_n\in\Br_{p^{e}}(F)$. 
    Hence $\alpha-\sum_{i=1}^{n}\beta_i\in\Br_p(F)$. 
    By \Cref{Merkur-Suslin-n-torsion-Br-cyclic-gen}, $\alpha-\sum_{i=1}^{n}\beta_i=\sum_{i=1}^{m}\gamma_i$ for some $m\in\nat$ and $p$-cycles $\gamma_1,\ldots,\gamma_m\in \Br(F)$.
    Hence $\alpha$ is a sum of cycles in $\Br_{p^e}(F)$.
\end{proof}

\section{Multiplying by $2$ in the Brauer group}\label{section: squares-in-brauer group}

From now on we assume that $\cchar F\neq2$.
We show that the hypotheses of \Cref{suff-condit-for-p-primary-gen-by-cyclic} for $p=e=2$ are satisfied to retrieve the positive answer to \Cref{p-primary-torsion-gen-by-cyclic?} in the case where $p^e=4$.
The argument also yields bounds on the index of exponent-$4$ algebras in terms of the $2$-symbol length, and hence an affirmative answer to \Cref{exp-ind-rel-without-primitive root} for these algebras.

We denote by $\sots F$ the set of nonzero sums of two squares in $F$. Note that $\sots F$ is a subgroup of $F$.

The following statement is essentially contained in \cite[Corollary 5.14]{LLT1993}. We include the argument for convenience.
\begin{prop}\label{lifting-quaternion-nec suff cond}
    Let $Q$ be an $F$-quaternion division algebra. The following are equivalent:
\begin{enumerate}[$(i)$]
    \item $-1$ is a norm in a quadratic field extension of $F$ contained in $Q$.
    \item $-1$ is a reduced norm of $Q$.
    \item $Q\sim C^{\otimes2}$ for some cyclic $F$-algebra $C$ of degree $4$.
    \item $Q\simeq(s,t)_F$ for certain $s\in \sots F$ and $t\in F^{\times}$. 
\end{enumerate}	
\end{prop}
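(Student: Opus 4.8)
The strategy is to prove the cycle of implications $(i)\Rightarrow(ii)\Rightarrow(iii)\Rightarrow(iv)\Rightarrow(i)$, using \Cref{lifting-cyclic-when-pth-root-in-F} with $p=2$, $e=2$ (so $\zeta=-1$, a primitive $2$nd root of unity, which lies in $F$ since $\cchar F\neq 2$) to handle the bridge between the norm condition and the cyclic degree-$4$ condition. I would first note that a quadratic field extension of $F$ inside $Q$ has the form $F(\sqrt{d})$ with $d\in\mg F\setminus\sq F$, and $Q\simeq (d,c)_F$ for a suitable $c\in\mg F$; this is the standard presentation recalled in the introduction.

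For \follows{i}{ii}: if $-1=\N_{K/F}(x)$ for a quadratic subfield $K=F(\sqrt d)\subseteq Q$, then writing $x=u+v\sqrt d\in K\subseteq Q$, the norm $\N_{K/F}(x)$ equals the reduced norm $\Nrd_Q(x)$ (the reduced norm restricted to a maximal subfield is the field norm), so $-1=\Nrd_Q(x)$. For \follows{ii}{iii}: this is exactly the implication encoded in \Cref{lifting-cyclic-when-pth-root-in-F}. Indeed $Q=[K/F,\s,c)$ for the quadratic extension $K/F$, and $\Nrd_Q$ restricted to $K$ is $\N_{K/F}$; if $-1$ is a reduced norm of $Q$ then — by the standard fact that every element of $\mg F$ that is a reduced norm of a quaternion algebra is already a norm from \emph{some} quadratic subfield, or more directly since any single element of $Q$ generates a quadratic subfield containing it unless it is central — we get $-1=\N_{K'/F}(x)$ for a quadratic subfield $K'\subseteq Q$. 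Applying \follows{i}{ii} of \Cref{lifting-cyclic-when-pth-root-in-F} with $\alpha=[Q]$, $e=2$ yields $[Q]=2\beta$ with $\beta$ the class of a cyclic $F$-algebra $C$ of degree $4$, i.e.\ $Q\sim C^{\otimes 2}$.

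For \follows{iii}{i}: conversely, if $Q\sim C^{\otimes 2}$ with $C$ cyclic of degree $4$, then $[Q]=2[C]$, and since $Q$ is a division algebra, $\exp[Q]=2$, so $[C]$ is a cycle (of exponent $4$, as $[C]\neq 0$ because $Q\neq 0$ in $\Br(F)$, while $\exp[C]\mid 4$); then \follows{ii}{i} of \Cref{lifting-cyclic-when-pth-root-in-F} gives a quadratic subfield $K\subseteq Q$ with $-1=\N_{K/F}(x)$ — one must check $Q$ contains this $K$, which follows because $[K/F,\s|_K,b)$ (the degree-$2$ algebra produced by that corollary) is Brauer equivalent to the division algebra $Q$ and of degree $2$, hence isomorphic to $Q$. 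Finally the equivalence \follows{i}{iv}: if $Q\simeq(d,c)_F$ with $-1=\N_{F(\sqrt d)/F}(u+v\sqrt d)=u^2-dv^2$, then $d=\frac{u^2+1}{v^2}$, so $d\equiv u^2+1\pmod{\sq F}$, i.e.\ $d$ is a sum of two squares up to a square factor; replacing $d$ by $s:=u^2+1\in\sots F$ gives $Q\simeq(s,c)_F$ with $t:=c$. Conversely if $Q\simeq(s,t)_F$ with $s=a^2+b^2\in\sots F$, then $-1=\N_{F(\sqrt s)/F}\!\big(\tfrac{a+\sqrt s}{b}\big)$ upon checking $\big(\tfrac{a}{b}\big)^2-s\big(\tfrac1b\big)^2=\tfrac{a^2-s}{b^2}=-1$, and $F(\sqrt s)$ is a quadratic subfield of $Q$ (here one uses $s\notin\sq F$, which holds because $Q$ is a division algebra and $(s,t)_F$ with $s$ a square would split).

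**Main obstacle.** The delicate point is the passage from "$-1$ is a reduced norm of $Q$" to "$-1$ is a norm from a quadratic \emph{subfield}" in \follows{ii}{i}, and symmetrically making sure in \follows{iii}{i} and \follows{ii}{iii} that the relevant quadratic étale algebra produced is a genuine \emph{field} (not $F\times F$) contained in the division algebra $Q$ — i.e.\ ruling out degenerate splittings. For a quaternion \emph{division} algebra this is automatic: any noncentral $x\in Q$ satisfies an irreducible quadratic over $F$, so $F[x]$ is a quadratic field extension, and $\Nrd_Q(x)=\N_{F[x]/F}(x)$; one just has to observe $-1$ cannot be realized by a central (scalar) element since $\Nrd_Q$ on $\mg F$ is $x\mapsto x^2$ and $-1\notin\sq F$ would be needed, which is fine, but if $-1\in\sq F$ then $Q$ splits anyway, contradicting that $Q$ is division — so that edge case does not arise. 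I would package these observations into one short lemma-free paragraph rather than belabor them.
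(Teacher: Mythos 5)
Your approach is essentially the paper's: the same observation that the reduced norm restricted to a quadratic subfield is the field norm handles $(i)\Leftrightarrow(ii)$, the same invocation of \Cref{lifting-cyclic-when-pth-root-in-F} with $p=e=2$ handles the bridge to $(iii)$, and $(i)\Leftrightarrow(iv)$ is the same sum-of-two-squares computation that the paper carries out via isotropy of $X^2+Y^2-sZ^2$. The minor deviation from your announced plan (you prove a cycle through $(i),(ii),(iii)$ and then attach $(iv)$ by a separate equivalence with $(i)$) is logically fine.

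However, your ``Main obstacle'' paragraph contains a genuine factual error. You claim that if $-1\in\sq F$ then $Q$ splits, ``contradicting that $Q$ is division --- so that edge case does not arise.'' This is false: $-1$ being a square does not force a quaternion algebra to split. For instance, over $F=\mathbb{C}(\!(x)\!)(\!(y)\!)$ one has $-1\in\sq F$ while $(x,y)_F$ is a division algebra. The good news is that the case you were trying to exclude does not actually need excluding: if $\Nrd_Q(x)=-1$ with $x\in\mg F$, then $x^2=-1$, and since $x\in F\subseteq K$ for any quadratic subfield $K\subseteq Q$ (one exists because $Q$ is division), you get $\N_{K/F}(x)=x^2=-1$, so condition $(i)$ still holds. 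This is exactly the paper's cleaner observation that in a quaternion division algebra every maximal commutative subring is a quadratic field extension, which covers central and noncentral elements uniformly.

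A related gap appears in your $(i)\Rightarrow(iv)$: the formula $d=\frac{u^2+1}{v^2}$ requires $v\neq 0$, and if $v=0$ then $-1=u^2\in\sq F$ and $s:=u^2+1=0$ is not even a unit. Again this case is unproblematic but must be handled separately: when $-1\in\sq F$ one has $\sots F=\mg F$ (every element is a sum of two squares, e.g.\ $a=\left(\frac{a+1}{2}\right)^2+\left(i\,\frac{a-1}{2}\right)^2$ where $i^2=-1$), so $d$ itself already lies in $\sots F$ and one may take $s=d$. The paper sidesteps both of these issues by phrasing the norm condition in terms of isotropy of $X^2+Y^2-sZ^2$, which is worth noting as the cleaner route.
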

\begin{proof}
    Let $\mathsf{Nrd}_Q:Q\to F$ denote the reduced norm map.
    For any quadratic field extension $K/F$ contained in $Q$ and any $x\in K$ we have $\mathsf{Nrd}_Q(x)=\N_{K/F}(x)$. 
    Therefore the implication $(i \Rightarrow ii)$ is obvious, and for $(ii \Rightarrow i)$, it suffices to observe that, since $Q$ is a division algebra, every maximal commutative subring of $Q$ is a quadratic field extension of $F$.

    The equivalence $(i \Leftrightarrow iii)$ corresponds to the equivalence formulated in \Cref{lifting-cyclic-when-pth-root-in-F} in the case where $p=e=2$, taking for $\alpha\in\Br(F)$ the class of $Q$.

    To finish the proof, it suffices to show the equivalence $(i\Leftrightarrow iv)$.
    Since $\cchar{F}\neq 2$, any quadratic field extension of $F$ is of the form $F(\sqrt{s})$ for some $s\in \mg F\setminus \sq F$, and for such $s$, we have that  $-1$ is a norm in $F(\sqrt{s})/F$ if and only if the quadratic form $X^2+Y^2-sZ^2$ over $F$ is isotropic, if and only if  $s\in \sots F$.
    Finally, given a quadratic field extension $K/F$ contained in $Q$ and $s\in\mg{F}$ such that $K\simeq F(\sqrt{s})$, by \cite[Theorem 5.9]{Alb39}, we can find an element $t\in \mg F$ such that $Q\simeq (s,t)_F$. 
\end{proof}

We denote by $\W F$ the Witt ring of $F$ and by $\I F$ its fundamental ideal.
For $n\in\nat^+$, we set $\I^nF=(\I F)^n$, and we call a regular quadratic form over $F$ whose Witt equivalence class belongs to $\I^nF$ simply a \emph{form in $\I^nF$}.
Given a regular quadratic form $q$ over $F$, we denote by $\dim q$ its dimension (rank).
By a \emph{torsion form} we shall mean a regular quadratic form over $F$ whose class in $\W F$ has finite additive order.
A quadratic form $q$ such that $2\times q$ is hyperbolic is called a \emph{$2$-torsion form}.
The following statement describes $2$-torsion forms in $\I^2F$. 

\begin{lem}\label{beta decomp}
    Let $q$ be a form in $\I^2F$. Let $m\in\nat^+$ be such that $\dim q=2m+2$. 
    Then $2\times q$ is hyperbolic if and only if $q$ is Witt equivalent to $\bigperp_{i=1}^m a_i\lla  s_i,t_i\rra $ for some $s_1,\ldots,s_m\in \sots F$ and $a_1, t_1,\ldots,a_m, t_m\in  F^{\times}$.
\end{lem}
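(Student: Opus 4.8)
The plan is to prove the "if" direction by a direct computation and the "only if" direction by induction on $m$, peeling off one Pfister-like factor at a time. For the "if" direction, suppose $q$ is Witt equivalent to $\bigperp_{i=1}^m a_i\lla s_i,t_i\rra$ with $s_i\in\sots F$. Each form $\lla s_i,t_i\rra=\la 1,-s_i\ra\otimes\la 1,-t_i\ra$ is a $2$-fold Pfister form, and since $s_i$ is a sum of two squares, $\la 1,-s_i\ra$ is a $2$-torsion form in $\W F$: indeed $2\times\la 1,-s_i\ra=\la 1,1,-s_i,-s_i\ra$, and $\la 1,1\ra\simeq\la s_i,s_i\ra$ because $s_i\in\sots F$ (a sum of two squares is represented by $\la 1,1\ra$, so $\la 1,1\ra$ and $\la s_i\ra\otimes\la 1,1\ra=\la s_i,s_i\ra$ are isometric as binary forms of equal determinant representing a common value). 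Hence $2\times\la 1,-s_i\ra$ is hyperbolic, so $2\times\lla s_i,t_i\rra$ is hyperbolic, and therefore $2\times\bigl(\bigperp a_i\lla s_i,t_i\rra\bigr)$ is hyperbolic; passing to a Witt-equivalent form preserves this, so $2\times q$ is hyperbolic.

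For the "only if" direction, assume $q\in\I^2F$, $\dim q=2m+2$, and $2\times q$ is hyperbolic. Since $q$ is a $2$-torsion form in $\I^2F$, its Clifford invariant $c(q)\in\Br_2(F)$ is well defined, and $2\cdot c(q)=0$ automatically; the point is to realize $c(q)$ by a quaternion algebra of the form $(s,t)_F$ with $s\in\sots F$. By \Cref{Merkur-Suslin-n-torsion-Br-cyclic-gen} (for $n=2$, i.e.\ Merkurjev's theorem), $c(q)$ is a sum of quaternion classes; but more to the point, I would argue that because $2\times q$ is hyperbolic, the form $q$ "lives over a field where $-1$ is a sum of squares in a controlled way". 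Concretely: if $q$ itself is hyperbolic there is nothing to prove ($m$ can be taken with all entries trivial, or rather the statement is read as the empty orthogonal sum being Witt-trivial — one handles the degenerate bookkeeping separately). Otherwise $q$ is isotropic or anisotropic; split off a hyperbolic plane if isotropic and reduce $\dim q$. The key lemma I expect to need is: a $2$-torsion anisotropic form in $\I^2F$ of dimension $2m+2\ge 4$ contains a subform isometric to $a\lla s,t\rra$ for some $s\in\sots F$, $a,t\in\mg F$; then the complementary form $q'$ defined by $q\simeq a\lla s,t\rra\perp q'$ lies in $\I^2F$ (since $a\lla s,t\rra\in\I^2F$), is again $2$-torsion (since $a\lla s,t\rra$ is, by the computation above), has dimension $2(m-1)+2$, and we finish by induction.

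The main obstacle is exactly this extraction step: showing that a $2$-torsion form in $\I^2F$ has a Pfister subform $a\lla s,t\rra$ with the sum-of-two-squares constraint on $s$. I would approach it via the round/Pfister structure: since $q\in\I^2F$ and $2\times q\sim 0$, consider a generic quaternion division algebra $Q$ with $[Q]$ a component of $c(q)$; by \Cref{lifting-quaternion-nec suff cond}, $2\times q$ being hyperbolic should force $-1$ to be a reduced norm of each such $Q$ that genuinely occurs, hence $Q\simeq(s,t)_F$ with $s\in\sots F$ by that proposition. Translating this Brauer-group statement back into a subform statement uses the fact that for a $2$-fold Pfister form $\pi$, one has $\pi$ hyperbolic iff $c(\pi)=0$, and that $q$ and $a\lla s,t\rra$ having the same "leading" Clifford data allows one to split $a\lla s,t\rra$ off $q$ in the Witt ring. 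I would handle the passage carefully using the fact that elements of $\I^2F/\I^3F$ are detected by the Clifford invariant and that a form in $\I^2F$ of small dimension is (Witt equivalent to) a sum of few $2$-fold Pfister forms; combined with \Cref{lifting-quaternion-nec suff cond}, each such Pfister form can be chosen with first slot in $\sots F$ precisely when the corresponding quaternion algebra has $-1$ as a reduced norm, which the $2$-torsion hypothesis on $q$ guarantees after absorbing lower-dimensional corrections into the $a_i$ and $t_i$. The bookkeeping to make the dimension count $2m+2$ exact (rather than merely Witt-equivalent to something of that shape) is the routine-but-fiddly part I would not grind through here.
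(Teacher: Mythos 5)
Your ``if'' direction is correct and essentially the same as the paper's (the paper simply asserts that $2\times\lla s,t\rra$ is hyperbolic for $s\in\sots F$; your computation $\la 1,1\ra\simeq\la s,s\ra$ is a fine way to justify it). You also correctly identify the overall shape of the ``only if'' direction: induction on $m$, peeling off a scaled $2$-fold Pfister form $a_1\lla s_1,t_1\rra$ with $s_1\in\sots F$ and passing to a remainder of dimension $2m$.

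The problem is the extraction step, which you explicitly leave open, and the route you sketch toward it does not work. First, the reasoning ``$2\times q$ hyperbolic should force $-1$ to be a reduced norm of each such $Q$ that genuinely occurs'' is not meaningful: a class in $\Br_2(F)$ has no canonical decomposition into quaternion summands, so there is no well-defined notion of a quaternion algebra ``occurring'' in $c(q)$, and the $2$-torsion condition on $q$ imposes no constraint on the individual factors of an arbitrary chosen decomposition. Second, the Brauer-theoretic fact you would actually need --- that $e_2(q)\in 2\Br(F)$ forces $c(q)$ to be a sum of classes $(s_i,t_i)_F$ with $s_i\in\sots F$ --- is exactly the content of \Cref{squre-Br-liftable quaternions}, whose equivalence $(ii)\Leftrightarrow(iii)$ the paper \emph{derives from} \Cref{beta decomp}; invoking it here would be circular unless you reproduce the independent corestriction argument used there for $(i)\Rightarrow(iii)$, which you do not. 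Third, even granting such a Brauer-group decomposition, it only determines $q$ modulo $\I^3F$, and converting that into an exact Witt-equivalence with precisely $m$ Pfister summands is not ``routine bookkeeping'' --- it is the whole content of the lemma, and the dimension control is where the real work lies.

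The paper's extraction is purely quadratic-form-theoretic and avoids all of this. After reducing to $q$ anisotropic, it applies \cite[Lemma 6.24]{EKM} to the isotropic form $2\times q$ to produce a binary subform $q_1\subset q$ with $2\times q_1$ hyperbolic; writing $q_1\simeq\la a_1,-a_1s_1\ra$ forces $s_1\in\sots F$. It then writes the complement as $q_2\simeq\la a\ra\perp q'$ and \emph{adjusts} it by setting $q''=q'\perp\la s_1 a\ra$ and $t_1=-a_1a$. A short check shows $q\perp -q''$ is Witt equivalent to $a_1\lla s_1,t_1\rra$, so $q''\in\I^2F$, $\dim q''=2m$, and $2\times q''$ is hyperbolic because $2\times a_1\lla s_1,t_1\rra$ is. Induction then applies to $q''$. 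This adjustment trick --- trading the $1$-dimensional piece $\la a\ra$ against $\la s_1a\ra$ so that the difference is exactly a scaled Pfister form with first slot $s_1$ --- is the key idea your proposal is missing; your ``key lemma'' (an anisotropic $2$-torsion form in $\I^2F$ contains an actual Pfister subform $a\lla s,t\rra$ with $s\in\sots F$) is stronger than what the paper uses and would require a separate justification you have not given.
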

\begin{proof}
    For $s\in\sots F$ and $t\in F^{\times}$, the form $2\times\lla  s,t\rra $ over $F$ is hyperbolic. This proves the right-to-left implication.
 
    We prove the opposite implication by induction on $m$. 
    If $m=0$, then $q$ is a $2$-dimensional quadratic form in $\I^2 F$ and must therefore be hyperbolic. In particular, the statement holds in this case.
    Suppose now that $m\geq1$.
    In view of the induction hypothesis, we may assume without loss of generality that $q$ is anisotropic. 
    As the quadratic form $2\times q$ is hyperbolic and hence in particular isotropic, it follows by \cite[Lemma 6.24]{EKM} that $q\simeq q_1\perp q_2$ for certain regular quadratic forms $q_1$ and $q_2$ over $F$ such that $\dim q_1=2$ and  $2\times q_1$ is hyperbolic.
    We fix an element $a_1\in\mg F$ represented by $q_1$.
    Then $q_1\simeq\langle a_1,-a_1s_1\rangle$  for some $s_1\in\mg F$.
    As $2\times q_1$ is hyperbolic, so is $2\times \la 1,-s_1\ra$, whereby $s_1\in\sots F$.
    We write $q_2\simeq \la a\ra\perp q'$ with $a\in\mg F$ and a $(2m-1)$-dimensional regular quadratic form $q'$ over $F$.  
    We set $q''=q'\perp\langle s_1a\rangle$ and $t_1=-a_1a$.
    We obtain that $q\perp -q''$ is Witt equivalent to $a_1\lla  s_1,t_1\rra $. 
    Since $s_1\in\sots F$, we have that $2\times \lla s_1,t_1\rra$ is hyperbolic.
    Therefore $2\times q''$ is Witt equivalent to $2\times q$, and hence equally hyperbolic.
    Furthermore, $q''$ is a form in $\I^2 F$. Since $\dim q''=2m$ and $2\times q''$ is hyperbolic,  the induction hypothesis yields that there exist $s_2,\ldots,s_m\in\sots F$ and $a_2,t_2,\ldots,a_m,t_m\in F^{\times}$ such that $q''$ is Witt equivalent to $\bigperp_{i=2}^m a_i\lla  s_i,t_i\rra $.
    Then $q$ is Witt equivalent to $\bigperp_{i=1}^m a_i\lla  s_i,t_i\rra $. 
    This concludes the proof.
\end{proof}

By \cite[Theorem 14.3]{EKM}, associating to a quadratic form  its Clifford algebra induces a homomorphism
$$e_2: \I^2F\to\Br_2(F)\,.$$ 
By Merkurjev's Theorem \cite[Theorem 44.1]{EKM} together with \cite[Theorem 4.1]{Mil70}, the kernel of this homomorphism is precisely $\I^3 F$.

For a quadratic field extension $K/F$, we denote by $\mathsf{cor}_{K/F}$ the corestriction homomorphism $\Br(K)\to\Br(F)$ defined in  \cite[Section 3.B]{BOI} (where it is denoted by $\mathsf{N}_{K/F}$).

\begin{prop}\label{squre-Br-liftable quaternions}
    Let $\beta\in\Br_2(F)$. The following are equivalent:
\begin{enumerate}[$(i)$]
    \item $\beta \in 2\Br(F)$.
    \item $\beta = e_2(q)$ for some $2$-torsion  form $q$ in $\I^2F$.
    \item $\beta $ is given by $\bigotimes_{i=1}^m(s_i,t_i)_F$ for certain $m\in\nat$, $s_1,\ldots,s_m\in\sots F$ and $t_1,\ldots,t_m\in \mg F$.
\end{enumerate} 
    Moreover, if these conditions are satisfied and $\ind \beta \leq 4$, then one can choose $m$ in $(iii)$ such that $\ind \beta=2^m$.
\end{prop}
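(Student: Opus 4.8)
The plan is to prove the cycle of implications $(i)\Rightarrow(ii)\Rightarrow(iii)\Rightarrow(i)$ and then address the index refinement separately. For $(iii)\Rightarrow(i)$: each quaternion algebra $(s_i,t_i)_F$ with $s_i\in\sots F$ is, by the implication $(iv)\Rightarrow(iii)$ of \Cref{lifting-quaternion-nec suff cond}, Brauer equivalent to $C_i^{\otimes 2}$ for a cyclic $F$-algebra $C_i$ of degree $4$ (if $(s_i,t_i)_F$ is split, take $C_i$ split); hence $\beta=2\gamma$ for $\gamma=\sum[C_i]\in\Br(F)$, so $\beta\in 2\Br(F)$. For $(i)\Rightarrow(ii)$: write $\beta=2\gamma$ with $\gamma\in\Br(F)$. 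Since $\beta\in\Br_2(F)$, we have $\exp(\gamma)\mid 4$, so $\gamma\in\Br_4(F)$, and $\gamma$ lies in the $2$-primary part of $\Br(F)$. The aim is to produce a $2$-torsion form $q\in\I^2F$ with $e_2(q)=\beta$. One route: by Merkurjev's theorem $\Br_2(F)=e_2(\I^2F)$, so already $\beta=e_2(q_0)$ for some $q_0\in\I^2F$; the problem is to arrange that $q_0$ can be taken $2$-torsion, i.e.\ that $2\times q_0$ is hyperbolic. I would try to control this via the fact that $2\beta=0$ forces a constraint, but more cleanly I would go through $(iii)$: it is perhaps easier to prove $(i)\Rightarrow(iii)$ directly and then read off $(ii)$ from $(iii)$ using \Cref{beta decomp}.

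So the real work is $(i)\Rightarrow(iii)$. Given $\beta=2\gamma$, I want to express $\beta$ as a product of quaternion symbols $(s,t)_F$ with $s\in\sots F$. Here I would invoke \Cref{lifting-cyclic-when-pth-root-in-F} and the structure of $\gamma$: after replacing $F$ by a suitable odd-degree or purely transcendental-free reduction one may assume $\gamma$ itself is in the $2$-primary Brauer group, and then, using Merkurjev--Suslin-type generation (\Cref{Merkur-Suslin-n-torsion-Br-cyclic-gen} is unavailable without roots of unity, so instead) one decomposes $\gamma$ as a sum of classes of cyclic algebras of $2$-power degree via the positive answer to \Cref{p-primary-torsion-gen-by-cyclic?} for $n=4$; doubling each such cyclic class of degree $4$ gives, by \Cref{Albert-powers of cyclic alg}, a quaternion algebra $[K/F,\s|_K,b)=(s,t)_F$, and the condition that $-1$ (equivalently $\zeta=-1$) be a norm from the quadratic subfield $K=F(\sqrt s)$ is exactly what forces $s\in\sots F$ via the norm computation in \Cref{lifting-quaternion-nec suff cond}. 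The classes of degree $2$ appearing in the decomposition of $\gamma$ contribute nothing after doubling. This produces the presentation in $(iii)$. Then $(ii)$ follows: the quadratic form $q=\bigperp_{i=1}^m\lla s_i,t_i\rra$ lies in $\I^2F$, has $e_2(q)=\beta$ by additivity of $e_2$ on symbols, and is $2$-torsion because each $2\times\lla s_i,t_i\rra$ with $s_i\in\sots F$ is hyperbolic.

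For the final refinement: suppose the equivalent conditions hold and $\ind\beta\le 4$. If $\ind\beta=1$ take $m=0$; if $\ind\beta=2$, $\beta$ is itself the class of a quaternion algebra satisfying $(i)$, hence by $(i\Rightarrow iv)$ of \Cref{lifting-quaternion-nec suff cond} it equals $(s,t)_F$ with $s\in\sots F$, so $m=1$ works. The substantive case is $\ind\beta=4$: starting from any presentation $\beta=\bigotimes_{i=1}^m(s_i,t_i)_F$ as in $(iii)$, the underlying division algebra $D$ has degree $4$, and I would show $\beta$ can be rewritten as a product of exactly two such symbols. The idea is to take a form $q\in\I^2F$ with $e_2(q)=\beta$ of minimal dimension; minimality and $\ind\beta=4$ force $\dim q=6$ (a dimension-$4$ form in $\I^2F$ with nontrivial Clifford invariant has Clifford index $2$, contradicting $\ind\beta=4$; and any $q$ is Witt-equivalent to one of dimension $\le 6$ in the relevant range by general $\I^2$-theory), so by \Cref{beta decomp} with $m=1$ we get $q\sim a_1\lla s_1,t_1\rra$ with $s_1\in\sots F$, whence scaling $\beta=e_2(q)=e_2(\lla s_1,t_1\rra)=(s_1,t_1)_F\cdot$—wait, dimension $6$ with $2m+2=6$ gives $m=2$, so $q\sim a_1\lla s_1,t_1\rra\perp a_2\lla s_2,t_2\rra$ and $\beta=(s_1,t_1)_F\otimes(s_2,t_2)_F$, as desired.

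The main obstacle I anticipate is $(i)\Rightarrow(iii)$ in the absence of a primitive $4$-th root of unity: one cannot directly apply \Cref{Merkur-Suslin-n-torsion-Br-cyclic-gen} or \Cref{lifting-cyclic-when-pth-root-in-F} (which needs $\zeta\in F$), so the decomposition of $\gamma$ into cyclic degree-$4$ classes must be obtained from the \emph{unconditional} positive answer to \Cref{p-primary-torsion-gen-by-cyclic?} for $n=4$ (the result of \cite[Proposition 16.6]{MS82}, reproven here as \Cref{C:Br4genbycycles}), and one must then carefully track, for each degree-$4$ cyclic summand $[L/F,\s,b)$, that its square $[K/F,\s|_K,b)$ is a \emph{quaternion} algebra whose slot $s$ (the discriminant of the quadratic subfield) lies in $\sots F$ — which holds precisely because $K/F$ embeds in the degree-$4$ cyclic extension $L/F$, forcing $-1=\N_{K/F}(x)$ by \Cref{Albert-extend-cyclic-when-p-th-root-inF} applied with the primitive $2$-nd root of unity $-1$, and hence $s\in\sots F$ by the quadratic-form criterion in the proof of \Cref{lifting-quaternion-nec suff cond}. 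Keeping this bookkeeping honest, and handling the degree-$2$ cyclic summands (which die upon doubling), is where the care is needed.
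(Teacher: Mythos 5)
Your plan reproduces the easy steps correctly: $(iii)\Rightarrow(i)$ via \Cref{lifting-quaternion-nec suff cond}, and the passage between $(ii)$ and $(iii)$ via \Cref{beta decomp}. (One small bookkeeping point: for the equivalence you need $(ii)\Rightarrow(iii)$, not merely $(iii)\Rightarrow(ii)$; it is the former that uses \Cref{beta decomp}, and your text only argues the latter.) The substantive issues lie in the implication $(i)\Rightarrow(iii)$ and in the supplementary index statement.

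For $(i)\Rightarrow(iii)$, you write $\beta=2\gamma$ with $\gamma\in\Br_4(F)$, decompose $\gamma$ into classes of cyclic algebras of degree dividing $4$, and double each degree-$4$ summand to produce a symbol $(s,t)_F$ with $s\in\sots F$. The bookkeeping at the end of your plan is sound: by \Cref{Albert-extend-cyclic-when-p-th-root-inF} with $\zeta=-1$, the quadratic subfield $K=F(\sqrt s)$ of a degree-$4$ cyclic extension must have $-1\in\N_{K/F}(K)$, which forces $s\in\sots F$, and \Cref{Albert-powers of cyclic alg} identifies the square as $(s,t)_F$. The problem is that the generation of $\Br_4(F)$ by cyclic classes of degree dividing $4$ \emph{without} assuming $\sqrt{-1}\in F$ is exactly \Cref{C:Br4genbycycles}, and the paper derives \Cref{C:Br4genbycycles} \emph{from} the present proposition (through \Cref{P:Br4genmodBr2by4cylces}). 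Your argument is therefore circular in the paper's logical order. You could cite \cite[Proposition 16.6]{MS82} externally, but the section's explicit goal (stated in the introduction) is to supply an independent proof of that very fact. The paper's actual route avoids any such input: if $-1\in\sq F$ then $\sots F=\mg F$ and $(iii)$ is immediate from \Cref{Merkur-Suslin-n-torsion-Br-cyclic-gen}; otherwise one sets $K=F(\sqrt{-1})$, shows $\beta\cup(-1)=0$ in $\HH^3(F,\mu_2)$ (using that $\beta\in 2\Br(F)$, together with \cite[Corollary A4]{LLT1993}), applies \cite[Theorem 99.13]{EKM} to obtain $\beta=\mathsf{cor}_{K/F}\beta'$ for some $\beta'\in\Br_2(K)$, and pushes a symbol presentation of $\beta'$ through the corestriction, using that $\mathsf{cor}_{K/F}$ sends $[(x,t)_K]$ (with $t\in\mg F$) to $[(\N_{K/F}(x),t)_F]$ and that $\N_{K/F}(\mg K)\subseteq\sots F$.

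For the supplementary statement with $\ind\beta=4$, your plan is to take a form $q\in\I^2F$ with $e_2(q)=\beta$ of minimal dimension, argue it has dimension $6$, and apply \Cref{beta decomp}. But \Cref{beta decomp} requires $q$ to be a $2$-torsion form, and dimension-minimality gives no control over torsion; conversely, the $2$-torsion form supplied by $(ii)$ need not have anisotropic dimension $6$ a priori. So neither candidate directly does the job. The paper instead takes the $6$-dimensional Albert form $q'$ of the biquaternion algebra (so $e_2(q')=\beta$) together with a $2$-torsion form $q$ with $e_2(q)=\beta$, notes that $q'\perp -q\in\I^3F$, hence $2\times q'$ is Witt-equivalent to $2\times q$ modulo $\I^4F$ and so lies in $\I^4F$; since $\dim(2\times q')=12<16$, the Arason--Pfister Hauptsatz (\cite[Theorem~23.7]{EKM}) gives that $2\times q'$ is hyperbolic, i.e.\ $q'$ itself is a $6$-dimensional $2$-torsion form in $\I^2F$ with $e_2(q')=\beta$, and then \Cref{beta decomp} applies to $q'$. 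This Hauptsatz step is the missing ingredient in your sketch.
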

\begin{proof}
    The implication $(iii \Rightarrow i)$ follows by \Cref{lifting-quaternion-nec suff cond}.
	
    For $m\in\nat$, $s_1,\ldots,s_m\in \sots F$ and $a_1,t_1,\ldots,a_m,t_m\in F^{\times}$, one has that $e_2(\bigperp_{i=1}^ma_i\lla  s_i,t_i\rra )\sim\bigotimes_{i=1}^m(s_i,t_i)_F$.
    Hence the equivalence $(ii\Leftrightarrow iii)$ follows by \Cref{beta decomp}.
	
    We show now the implication $(i\Rightarrow iii)$.
    If $-1\in F^{\times2}$, then  $\mg F = \sots F$, so $(iii)$ holds by \Cref{Merkur-Suslin-n-torsion-Br-cyclic-gen}. 
    Assume now that $-1\in F^{\times}\setminus F^{\times2}$ and that $(i)$ holds. We set $K=F(\sqrt{-1})$.
    As $\beta \in\Br_2(F)$, it follows by \Cref{Merkur-Suslin-n-torsion-Br-cyclic-gen} together with \cite[Corollary A4]{LLT1993} that $\beta \cup(-1)=0$ in $H^3(F,\mu_2)$. 
    By \cite[Theorem 99.13]{EKM}, we obtain that $\beta =\mathsf{cor}_{K/F}\beta '$ for some $\beta '\in\Br_2(K)$.
    By \Cref{Merkur-Suslin-n-torsion-Br-cyclic-gen} and \cite[Proposition 100.2]{EKM}, $\Br_2(K)$ is generated by the classes of $K$-quaternion algebras $(x,t)_K$ with $x\in\mg K$ and $t\in \mg F$, and the corestriction with respect to $K/F$ of such a class is given by $(\N_{K/F}(x),t)_F$.
    Since $\N_{K/F}(\mg{K})\subseteq \sots F$ and $\beta  =\mathsf{cor}_{K/F} \beta '$, we obtain
    that $\beta $ is given by $\bigotimes_{i=1}^m(s_i,t_i)_F$ for some $m\in\nat$, $s_1,\dots,s_m\in\sots F$ and $t_1,\dots,t_m\in \mg F$.

    Hence the equivalence of $(i)$--$(iii)$ is established and it remains to prove the supplementary statement under the assumption that $\ind \beta \leq4$. In this case $\beta $ is the class of an $F$-biquaternion algebra.
    It follows by \cite[Section 16.A]{BOI} that $\beta = e_2(q')$ for a $6$-dimensional form $q'$ in $\I^2F$.
    By $(ii)$, there also exists  a $2$-torsion form $q$ in $\I^2 F$ with $\beta =e_2(q)$.
    Then $q'\perp -q$ is a form in $\I^2 F$ with $e_2(q'\perp-q)=0$. As mentioned above, this implies that $q'\perp-q$ is a form in $\I^3F$.
    Since $2\times q$ is hyperbolic, the Witt class of $2\times q'$ lies in $\I^4F$. 
    Note that $\dim2\times q'<16$. 
    Thus $2\times q'$ is hyperbolic, by \cite[Theorem 23.7]{EKM}, and hence \Cref{beta decomp} yields the result. 
\end{proof}

By \Cref{squre-Br-liftable quaternions}, for $p=e=2$, the hypotheses of \Cref{suff-condit-for-p-primary-gen-by-cyclic} on $2\Br(F)\cap\Br_2(F)$ are satisfied unconditionally. 
Hence one gets a positive answer to \Cref{p-primary-torsion-gen-by-cyclic?} for $p^e=4$. 
We will formulate this result together with a bound on the index of exponent-$4$ algebras in terms of the $2$-symbol length.

For $\alpha\in\Br_4(F)$, we denote by
$\mu(\alpha)$ the smallest  $m\in\nat$ for which there exist $s_1,\ldots,s_m\in\sots F$ and $t_1,\ldots,t_m\in F^{\times}$ with $2\alpha=\sum_{i=1}^{m}[(s_i,t_i)_F]$, noticing that such a representation does exist in view of \Cref{squre-Br-liftable quaternions}.
We set further 
 $$\mu(F)=\sup\,\{\mu(\alpha)\mid \alpha\in\Br_4(F)\}\in\nat\cup\{\infty\}.$$
\begin{rem} 
    If  $\sots F=F^{\times}$, then $\mu(F)=\lambda_2(F)$.
\end{rem}

The invariants $\lambda_2(F)$ and $\mu(F)$ are related to the existence of anisotropic torsion (respectively $2$-torsion) forms over $F$ in certain dimensions.
Recall that the $u$-invariant of $F$ is defined as 
$$u(F)=\sup\{\dim q\mid q\,\text{anisotropic torsion form over}\, F\}\in\nat\cup\{\infty\}.$$

We  refer to \cite[Chapter~8]{Pfi95} for a general discussion of this invariant.
\begin{prop}\label{2-symbol-u-inv}
    If $F$ is nonreal, then $\lambda_2(F)\leq\max\left\{0,\half u(F)-1\right\}$.
\end{prop}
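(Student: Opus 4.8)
The plan is to route everything through the Clifford-invariant homomorphism $e_2\colon\I^2F\to\Br_2(F)$. By Merkurjev's Theorem (\Cref{Merkur-Suslin-n-torsion-Br-cyclic-gen} for $n=2$) the group $\Br_2(F)$ is generated by quaternion algebra classes, so $e_2$ is surjective, and its kernel is $\I^3F$. The second ingredient is classical: since $F$ is nonreal, $\W F$ is a torsion group, so every anisotropic form over $F$ is a torsion form and hence has dimension at most $u(F)$. Given $\beta\in\Br_2(F)$, I would choose a form $\varphi$ in $\I^2F$ with $e_2(\varphi)=\beta$, replace it by its anisotropic part, and bound $\lambda_2(\beta)$ in terms of $\dim\varphi$.

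The technical core is the following claim, proved by induction on $m\in\nat^+$ (the inductive scheme runs parallel to that of \Cref{beta decomp}, but without the restriction to sums of two squares): \emph{if $\varphi$ is a form in $\I^2F$ whose anisotropic part has dimension at most $2m$, then $e_2(\varphi)$ is Brauer equivalent to a tensor product of at most $m-1$ quaternion $F$-algebras} (an empty tensor product being understood as split). For $m=1$ this holds because a binary form in $\I^2F$ has trivial signed discriminant and is hence hyperbolic, so the anisotropic part of $\varphi$ is the zero form and $e_2(\varphi)=0$. For $m\geq2$ one may replace $\varphi$ by its anisotropic part without changing $e_2(\varphi)$; writing $d=\dim\varphi$, the number $d$ is even, and $d\neq2$ by the case $m=1$. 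If $d=0$ then $e_2(\varphi)=0$. If $d=4$, then $\varphi$ has trivial signed discriminant, so $\varphi\simeq\langle a,b,c,abc\rangle$ for some $a,b,c\in\mg F$; this form is similar to the $2$-fold Pfister form $\lla-ab,-ac\rra$ (indeed $\varphi\simeq a\lla-ab,-ac\rra$), and similar forms in $\I^2F$ differ in $\W F$ by an element of $\I F\cdot\I^2F=\I^3F=\Ker e_2$, so $e_2(\varphi)=e_2(\lla-ab,-ac\rra)$ is the class of a single quaternion algebra, which is admissible as $m\geq2$.

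It remains to treat $d\geq6$, in which case $m\geq3$. Here I would diagonalise $\varphi\simeq\langle a_1,\dots,a_d\rangle$ and split off $\rho=\langle a_1,a_2,a_3,a_1a_2a_3\rangle$. A signed-discriminant computation shows $\rho\in\I^2F$, and its anisotropic part has dimension at most $4$, so the case $m=2$ of the claim gives that $e_2(\rho)$ is represented by at most one quaternion algebra. On the other hand $\varphi-\rho$ lies in $\I^2F$ and, after cancelling the three hyperbolic planes $\langle a_i,-a_i\rangle$ for $i=1,2,3$, is Witt equivalent to $\langle a_4,\dots,a_d,-a_1a_2a_3\rangle$, a form of dimension $d-2\leq2(m-1)$; by the induction hypothesis $e_2(\varphi-\rho)$ is a tensor product of at most $m-2$ quaternion algebras. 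Since $e_2$ is additive and symbol length is subadditive, $e_2(\varphi)=e_2(\rho)+e_2(\varphi-\rho)$ is a tensor product of at most $1+(m-2)=m-1$ quaternion algebras, completing the induction.

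Granting the claim, the proposition follows quickly: for $\beta\in\Br_2(F)$ write $\beta=e_2(\varphi)$ with $\varphi\in\I^2F$, let $2m'$ be the (even) dimension of the anisotropic part of $\varphi$, and put $m=\max\{1,m'\}$. As $F$ is nonreal, the anisotropic part is a torsion form, so $2m'\leq u(F)$ and thus $m\leq\max\{1,\half u(F)\}$; the claim then yields $\lambda_2(\beta)\leq m-1\leq\max\{1,\half u(F)\}-1=\max\{0,\half u(F)-1\}$, and taking the supremum over $\beta$ gives $\lambda_2(F)\leq\max\{0,\half u(F)-1\}$. I expect the inductive claim to be the main obstacle; inside it, the delicate points are checking that $\rho$ and $\varphi-\rho$ genuinely lie in $\I^2F$ and keeping track of how the dimension of the anisotropic part drops after subtracting $\rho$, while everything else is routine. (It is worth noting that the truncation at $0$ is needed only when $u(F)\leq2$: if $\Br_2(F)\neq0$ there is an anisotropic form in $\I^2F$ of dimension $\geq4$, which already forces $u(F)\geq4$.)
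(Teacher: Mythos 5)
Your proof is correct, and it supplies a self-contained argument where the paper merely cites Kahn's Théorème~2 (\cite{Kahn1990}); the inductive claim you isolate — that a form $\varphi\in\I^2F$ of anisotropic dimension at most $2m$ has $e_2(\varphi)$ representable by at most $m-1$ quaternion classes — is precisely the content underlying Kahn's bound. Each step checks out: for $d=4$ the identity $\langle a,b,c,abc\rangle\simeq a\lla -ab,-ac\rra$ together with $\I F\cdot\I^2F\subseteq\Ker e_2$ gives a single quaternion; for $d\geq6$ the auxiliary form $\rho=\langle a_1,a_2,a_3,a_1a_2a_3\rangle$ has trivial signed discriminant (hence lies in $\I^2F$), and cancelling the three hyperbolic planes in $\varphi\perp-\rho$ does reduce the dimension by $2$, so the induction closes with $1+(m-2)=m-1$. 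Your decomposition is related to but genuinely different from the one the paper uses in \Cref{beta decomp}: there one splits off a \emph{binary} subform $q_1$ with $2\times q_1$ hyperbolic, forced by the additional constraint of landing in $\sots F$; your version peels off a $4$-dimensional discriminant form and is accordingly a bit more direct, since there is no analogous constraint to track. The only wrinkle is notational: the paper's definition has $\lambda_n(A)\in\nat^+$, so $\lambda_2(\beta)\leq m-1$ when $m=1$ would read $\lambda_2(\beta)\leq0$, which implicitly adopts the (standard, and clearly intended here given the $\max\{0,\ldots\}$ in the statement) convention that the trivial class has symbol length $0$; this is a quirk of the paper's conventions rather than a gap in your argument.
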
 
\begin{proof}
    See \cite[Théorème 2]{Kahn1990}.
\end{proof}

In  \cite[Section 8.2]{Pfi95}, the following relative of the $u$-invariant is studied. 

\begin{equation*}
    \mbox{$u'(F)=\sup\left\{\dim q\mid  q \text{ anisotropic $2$-torsion form over $F$}\right\}\in\nat\cup\{\infty\}$.}
\end{equation*}
Note that clearly $u'(F)\leq u(F)$.

\begin{prop}\label{rel-mu-u}
    We have $\mu(F)\leq \max\left\{0,\half u'(F)-1\right\}\,.$
\end{prop}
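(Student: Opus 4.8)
The plan is to bound $\mu(F)$ by relating it to the minimal dimension of a $2$-torsion form in $\I^2F$ representing a given class $2\alpha$, and then to invoke the definition of $u'(F)$. Fix $\alpha\in\Br_4(F)$. By \Cref{squre-Br-liftable quaternions}, the class $\beta=2\alpha$ lies in $2\Br(F)\cap\Br_2(F)$, so $\beta=e_2(q)$ for some $2$-torsion form $q$ in $\I^2F$. Among all such forms choose one of minimal dimension; after discarding hyperbolic summands we may assume $q$ is anisotropic. Write $\dim q=2m+2$ if $q$ is nonzero (and set $m=0$ if $q$ is hyperbolic, i.e.\ $\beta=0$). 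Since $q$ is an anisotropic $2$-torsion form, $\dim q\leq u'(F)$, hence $m\leq\half u'(F)-1$ (and trivially $m\geq 0$), so $m\leq\max\{0,\half u'(F)-1\}$.

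Next I would show that this $m$ is an admissible value in the definition of $\mu(\alpha)$, i.e.\ that $2\alpha$ is a sum of $m$ symbols $(s_i,t_i)_F$ with $s_i\in\sots F$. If $q$ is hyperbolic this is clear. Otherwise, since $q$ is a $2$-torsion form in $\I^2F$ of dimension $2m+2$, \Cref{beta decomp} applies and yields $s_1,\ldots,s_m\in\sots F$ and $a_1,t_1,\ldots,a_m,t_m\in\mg F$ with $q$ Witt equivalent to $\bigperp_{i=1}^m a_i\lla s_i,t_i\rra$. Applying $e_2$ and using the identity $e_2(\bigperp_{i=1}^m a_i\lla s_i,t_i\rra)\sim\bigotimes_{i=1}^m(s_i,t_i)_F$ recorded in the proof of \Cref{squre-Br-liftable quaternions}, we get $2\alpha=\beta=e_2(q)=\sum_{i=1}^m[(s_i,t_i)_F]$ with all $s_i\in\sots F$. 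By definition of $\mu(\alpha)$ this gives $\mu(\alpha)\leq m\leq\max\{0,\half u'(F)-1\}$. Taking the supremum over $\alpha\in\Br_4(F)$ yields $\mu(F)\leq\max\{0,\half u'(F)-1\}$.

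One technical point to be careful about is the edge case $u'(F)$ small or the case $\beta=0$: when $\beta=0$ we simply have $\mu(\alpha)=0\leq\max\{0,\half u'(F)-1\}$, and when $\beta\neq 0$ its representing anisotropic form has dimension $\geq 2$, so $m\geq 1$ and $u'(F)\geq 4$, so the bound $\half u'(F)-1$ is at least $1$ and is the operative one; the $\max$ with $0$ is only needed to cover the degenerate situations. I do not expect a genuine obstacle here: the argument is essentially a repackaging of \Cref{beta decomp} and \Cref{squre-Br-liftable quaternions} in the language of the $u'$-invariant, parallel to how \Cref{2-symbol-u-inv} repackages Kahn's theorem. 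The only place demanding a little care is making sure that passing to an anisotropic representative of minimal dimension does not destroy the property of lying in $\I^2F$ or being $2$-torsion, which is immediate since removing hyperbolic planes changes neither the Witt class nor membership in $\I^2F$.
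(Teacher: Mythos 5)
Your proof is essentially the same as the paper's. Both arguments write $2\alpha=e_2(q)$ for a $2$-torsion form $q$ in $\I^2F$ (via \Cref{squre-Br-liftable quaternions}), pass to the anisotropic part, bound its dimension by $u'(F)$, and invoke \Cref{beta decomp} to produce the symbols $(s_i,t_i)_F$ with $s_i\in\sots F$; the only cosmetic difference is that you pick a minimal-dimensional anisotropic representative up front, whereas the paper fixes $m\in\nat^+$ with $u'(F)\leq 2m+2$ and pads the anisotropic form with hyperbolic planes to reach dimension exactly $2m+2$ before applying the lemma.

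One small slip in your closing remark: a nonzero anisotropic form in $\I^2F$ has dimension at least $4$, not merely at least $2$; this is what actually forces $m\geq1$ and $u'(F)\geq4$ in the nontrivial case and makes \Cref{beta decomp} applicable with $m\in\nat^+$. The conclusion you draw is right, but the justification as stated is off by one Pfister layer.
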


\begin{proof}
    We need to show that $\mu(\alpha)\leq m$ holds for any $\alpha\in\Br_4(F)$ and any $m\in\nat^+$ with $u'(F)\leq 2m+2$.
    Let $m\in\nat^+$ be such that $u'(F)\leq 2m+2$. Let $\alpha\in\Br_4(F)$. 
    By \Cref{squre-Br-liftable quaternions}, we have $2\alpha=e_2(q)$ for some $2$-torsion form $q$ in $\I^2F$. 
    Then $\dim q \leq u'(F)\leq2m+2$. 
    Hence $q$ is even-dimensional and we obtain that $q$ is Witt equivalent to a quadratic form of dimension $2m+2$.
    It follows by \Cref{beta decomp} that $q$ is Witt equivalent to $\bigperp_{i=1}^m a_i\lla  s_i,t_i\rra $ for some $s_1,\ldots,s_m\in\sots F$ and $a_1, t_1,\ldots,a_m,t_m\in F^{\times}$. 
    Then $$2\alpha=e_2(q)=e_2\left(\bigperp_{i=1}^m a_i\lla  s_i,t_i\rra \right)=\sum_{i=1}^{m}[(s_i,t_i)]\,,$$ whereby $\mu(\alpha)\leq m$. 
\end{proof}

The last statements motivate the following question.

\begin{qu}\label{Q:mu-lambda2}
    Is $\mu(F)\leq \lambda_2(F)$?
\end{qu}    

If $\lambda_2(F)\leq 2$, then a positive answer to \Cref{Q:mu-lambda2} is obtained by  \Cref{squre-Br-liftable quaternions}.
In the following example, the inequality in \Cref{rel-mu-u} is strict.

\begin{ex}
    Consider the iterated power series field $F=\mathbb{C}(\!(x)\!)(\!(y)\!)(\!(z)\!)$.
    The $8$-dimensional quadratic form $\varphi=\la 1,x,y,z,xy,xz,yz,xyz\ra$ over $F$ is anisotropic. 
    Since $-1$ is square in $F$ and $\mg{F}/\sq{F}$ is generated by the square-classes of $x,y$ and $z$, it is easy to see that every anisotropic quadratic form over $F$ is a subform of $\varphi$.
    This implies on the one hand that $u(F)=8$, on the other hand that $\lambda_2(F)=1$, because there is no anisotropic $6$-dimensional form in $\I^2 F$. 
    Furthermore $-1\in\sq{F}$, so $u'(F)=u(F)=8$ and $\mu(F)=\lambda_2(F)=1$.
\end{ex}

\begin{prop}\label{P:Br4genmodBr2by4cylces}
    Let $\alpha\in\Br_{4}(F)$. There exist a natural number $m\leq\mu(F)$ and $4$-cycles $\alpha_1,\ldots,\alpha_m\in\Br(F)$ such that $\alpha\equiv\sum_{i=1}^{m}\alpha_i\bmod {\Br_{2}(F)}$.
\end{prop}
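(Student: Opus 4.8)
The plan is to combine \Cref{squre-Br-liftable quaternions} with the lifting result \Cref{lifting-quaternion-nec suff cond}, exactly mirroring the structure of the proof of \Cref{T:p} but without invoking roots of unity. Given $\alpha\in\Br_4(F)$, the element $2\alpha$ lies in $2\Br(F)\cap\Br_2(F)$, so by \Cref{squre-Br-liftable quaternions} we may write $2\alpha=\sum_{i=1}^m[(s_i,t_i)_F]$ with $m=\mu(\alpha)\leq\mu(F)$, $s_i\in\sots F$ and $t_i\in\mg F$. The point of choosing representatives with $s_i\in\sots F$ is that each quaternion algebra $(s_i,t_i)_F$ is then ``liftable'': by the implication $(iv\Rightarrow iii)$ of \Cref{lifting-quaternion-nec suff cond}, there is a cyclic $F$-algebra $C_i$ of degree $4$ with $C_i^{\otimes 2}\sim(s_i,t_i)_F$. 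One subtlety: \Cref{lifting-quaternion-nec suff cond} is stated for a quaternion \emph{division} algebra $Q$, whereas $(s_i,t_i)_F$ could split; in that degenerate case $(s_i,t_i)_F$ is $2\times 2$ matrices and $2\alpha$ is unaffected by dropping that term, so one may assume each $(s_i,t_i)_F$ is division, or alternatively note that a split quaternion algebra is trivially $D^{\otimes 2}$ for $D$ split of degree $4$.

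Next I would let $\alpha_i=[C_i]\in\Br(F)$ for $i=1,\dots,m$. Since $C_i$ has degree $4$ and $C_i^{\otimes 2}\sim(s_i,t_i)_F$ is a nontrivial quaternion class (in the division case), $C_i$ cannot have exponent dividing $2$, so $\exp\alpha_i=4=\deg C_i$; being cyclic of degree equal to its exponent, $\alpha_i$ is a $4$-cycle by the definition given just before \Cref{P:cylce-lift}. Now set $\alpha'=\alpha-\sum_{i=1}^m\alpha_i$. Then
\[
2\alpha'\;=\;2\alpha-\sum_{i=1}^m 2\alpha_i\;=\;\sum_{i=1}^m[(s_i,t_i)_F]-\sum_{i=1}^m[(s_i,t_i)_F]\;=\;0,
\]
so $\alpha'\in\Br_2(F)$, which is precisely the assertion $\alpha\equiv\sum_{i=1}^m\alpha_i\bmod\Br_2(F)$.

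The argument is essentially bookkeeping; there is no serious obstacle. The only thing to be careful about is the handling of split quaternion factors mentioned above, and the verification that the lifted algebra $C_i$ genuinely has exponent (equivalently degree) $4$ rather than $2$ or $1$ — this is where it matters that we work with a quaternion \emph{division} algebra in \Cref{lifting-quaternion-nec suff cond}, so that $C_i^{\otimes 2}$ is not split and hence $\alpha_i$ has order exactly $4$ in $\Br(F)$, making it a bona fide $4$-cycle. One should also remark that if $\mu(\alpha)=0$ then $2\alpha=0$, $\alpha\in\Br_2(F)$, and the empty sum works, so the statement holds with $m=0$.
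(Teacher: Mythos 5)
Your proof is correct and follows essentially the same route as the paper: write $2\alpha=\sum_{i=1}^m[(s_i,t_i)_F]$ via \Cref{squre-Br-liftable quaternions}, lift each symbol to a $4$-cycle via \Cref{lifting-quaternion-nec suff cond}, and subtract. Your attention to the split case is a reasonable precaution that the paper's proof elides; note that it is in fact automatic once you take $m=\mu(\alpha)$ minimal, since a split factor could simply be dropped, contradicting minimality.
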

\begin{proof}
    By \Cref{squre-Br-liftable quaternions} and the definition of $\mu(F)$, there exist a natural number $m\leq\mu(F)$ and $s_1,\ldots,s_m\in\sots F$ and $t_1,\ldots,t_m\in F^{\times}$ such that $2\alpha=\sum_{i=1}^{m}[(s_i,t_i)_F]$.
    By \Cref{lifting-quaternion-nec suff cond}, for $1\leq i\leq m$, we can find a $4$-cycle $\alpha_i\in\Br_4(F)$ such that $2\alpha_i=[(s_i,t_i)_F]$.
    We obtain that $2\alpha-\sum_{i=1}^{m}2\alpha_i=0$, whereby $\alpha-\sum_{i=1}^{m}\alpha_i\in\Br_2(F)$.
    Therefore $\alpha\equiv\sum_{i=1}^{m}\alpha_i\bmod {\Br_{2}(F)}$.	
\end{proof}

We retrieve \cite[Proposition 6.16]{MS82}:

\begin{cor}\label{C:Br4genbycycles}
    $\Br_4(F)$ is generated by cycles.
\end{cor}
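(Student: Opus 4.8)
The plan is to deduce $\Cref{C:Br4genbycycles}$ from $\Cref{P:Br4genmodBr2by4cylces}$ together with the case $p=2$ of $\Cref{Merkur-Suslin-n-torsion-Br-cyclic-gen}$, which applies because we have assumed $\cchar F\neq 2$. The key point is that a $4$-cycle is by definition a cycle, and a $2$-cycle (i.e.\ the class of a quaternion division algebra, or $0$) is also a cycle, so it suffices to show that every $\alpha\in\Br_4(F)$ is a sum of elements of these two types.

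First I would take an arbitrary $\alpha\in\Br_4(F)$. By $\Cref{P:Br4genmodBr2by4cylces}$, there exist $m\leq\mu(F)$ and $4$-cycles $\alpha_1,\ldots,\alpha_m\in\Br(F)$ with $\alpha\equiv\sum_{i=1}^m\alpha_i\bmod\Br_2(F)$; that is, the element $\gamma=\alpha-\sum_{i=1}^m\alpha_i$ lies in $\Br_2(F)$. Next I would apply $\Cref{Merkur-Suslin-n-torsion-Br-cyclic-gen}$ with $n=2$ (legitimate since a field of characteristic different from $2$ contains the primitive $2$nd root of unity $-1$) to write $\gamma=\sum_{j=1}^k\delta_j$ where each $\delta_j$ is the class of a cyclic $F$-algebra of degree dividing $2$, i.e.\ a quaternion algebra or the split algebra. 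Each such $\delta_j$ is either $0$ or, after passing to the underlying division algebra, the class of a quaternion division algebra; in either case $\delta_j$ is a $2$-cycle (when $\delta_j=0$ one may simply drop it from the sum), hence a cycle. Finally, combining, $\alpha=\sum_{i=1}^m\alpha_i+\sum_{j=1}^k\delta_j$ exhibits $\alpha$ as a sum of cycles, which is what we wanted.

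I do not expect any genuine obstacle here: both $\Cref{P:Br4genmodBr2by4cylces}$ and $\Cref{Merkur-Suslin-n-torsion-Br-cyclic-gen}$ are already available, and the only small point requiring care is the identification of the Merkurjev generators with $2$-cycles, which follows from the definition of an $m$-cycle given before $\Cref{P:cylce-lift}$ (taking $m=2$ and $K$ the quadratic splitting field, or discarding trivial contributions). So the proof is essentially one paragraph: reduce modulo $\Br_2(F)$ using $\Cref{P:Br4genmodBr2by4cylces}$, handle the remaining $2$-torsion class by Merkurjev's theorem, and observe that all pieces are cycles.
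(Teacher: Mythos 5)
Your proof is correct and follows exactly the paper's argument: reduce $\alpha$ modulo $\Br_2(F)$ via \Cref{P:Br4genmodBr2by4cylces} to obtain a sum of $4$-cycles, then express the remaining class in $\Br_2(F)$ as a sum of quaternion (hence $2$-cycle) classes by \Cref{Merkur-Suslin-n-torsion-Br-cyclic-gen}. No substantive difference from the paper's proof.
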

\begin{proof}
    By \Cref{Merkur-Suslin-n-torsion-Br-cyclic-gen}, $\Br_2(F)$ is generated by classes of $F$-quaternion division algebras and thus by $2$-cycles.
    The statement now follows by combining this fact with \Cref{P:Br4genmodBr2by4cylces}.
\end{proof}

\begin{thm}\label{Br_4-bound-mixed-symbol-length}
    We have $\lambda_4(F)\leq \lambda_2(F)+\mu(F)$.  
    Furthermore, for $\alpha\in\Br(F)$, there exist $\beta\in\Br_4(F)$ with $\lambda_4(\beta)\leq\mu(\alpha)$ and $\gamma\in\Br_2(F)$ such that $\alpha=\beta+\gamma$, and in particular 
    $\ind \alpha =2^n$ for some natural number  $n\leq \lambda_2(F)+2\mu(F)$.
\end{thm}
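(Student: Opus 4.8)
The plan is to reduce everything to the decomposition already furnished by \Cref{P:Br4genmodBr2by4cylces}, together with the Merkurjev bound for the $2$-torsion part. First I would prove the second (more refined) assertion, since the inequality $\lambda_4(F)\leq\lambda_2(F)+\mu(F)$ will follow from it by taking suprema. So fix $\alpha\in\Br(F)$. Observe that since $\mu(\alpha)$ is only defined for classes in $\Br_4(F)$, the statement implicitly requires $2\alpha$ to make sense as input to $\mu$; more precisely, I should first reduce to $\alpha\in\Br_4(F)$: write a general $\alpha$ as a sum of its primary components and note that only the $2$-primary part is relevant, and within that, if $\exp\alpha$ exceeds $4$ the quantity $\mu(\alpha)$ is not defined, so the intended reading is $\alpha\in\Br_4(F)$. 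Assuming this, apply \Cref{P:Br4genmodBr2by4cylces} to get a natural number $m\leq\mu(\alpha)$ and $4$-cycles $\alpha_1,\dots,\alpha_m\in\Br(F)$ with $\alpha-\sum_{i=1}^m\alpha_i\in\Br_2(F)$. Set $\beta=\sum_{i=1}^m\alpha_i$ and $\gamma=\alpha-\beta$. Then $\beta\in\Br_4(F)$ (each $\alpha_i$ being a $4$-cycle has exponent dividing $4$), $\gamma\in\Br_2(F)$, $\alpha=\beta+\gamma$, and by construction $\beta$ is a sum of $m\leq\mu(\alpha)$ cyclic algebras of degree dividing $4$, so $\lambda_4(\beta)\leq m\leq\mu(\alpha)$.

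Next I would bound the indices. Each $4$-cycle $\alpha_i$ has index dividing $4$, so $\ind\beta$ divides $\prod_{i=1}^m\ind\alpha_i$, which is a power of $2$ of exponent at most $2m\leq 2\mu(\alpha)\leq 2\mu(F)$. For $\gamma\in\Br_2(F)$, \Cref{Merkur-Suslin-n-torsion-Br-cyclic-gen} applied with $n=2$ (recall $\cchar F\neq2$, so the root-of-unity hypothesis is automatic) together with the definition of $\lambda_2(F)$ writes $\gamma$ as a sum of at most $\lambda_2(F)$ quaternion classes, hence $\ind\gamma$ is a power of $2$ of exponent at most $\lambda_2(F)$. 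Since $\ind\alpha$ divides $\ind\beta\cdot\ind\gamma$, we conclude $\ind\alpha=2^n$ for some natural number $n\leq 2\mu(F)+\lambda_2(F)$, as claimed. Finally, for the inequality $\lambda_4(F)\leq\lambda_2(F)+\mu(F)$: for any $\alpha\in\Br_4(F)$, decompose $\alpha=\beta+\gamma$ as above; $\beta$ is Brauer equivalent to a product of $\leq\mu(\alpha)\leq\mu(F)$ algebras cyclic of degree dividing $4$, and $\gamma$ is Brauer equivalent to a product of $\leq\lambda_2(F)$ quaternion algebras, which are in particular cyclic of degree dividing $4$; hence $\lambda_4(\alpha)\leq\lambda_2(F)+\mu(F)$, and taking the supremum over $\alpha$ gives the result.

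I do not anticipate a serious obstacle here: the substantive content is entirely in the already-established \Cref{P:Br4genmodBr2by4cylces} (which in turn rests on \Cref{squre-Br-liftable quaternions} and \Cref{lifting-quaternion-nec suff cond}) and in Merkurjev's theorem. The only point requiring a little care is the bookkeeping of which quantities are finite: if $\mu(F)=\infty$ or $\lambda_2(F)=\infty$ the inequalities are vacuous, and the per-element statement involving $\mu(\alpha)$ remains meaningful and is what actually carries the index bound, so I would phrase the index estimate primarily in terms of $\mu(\alpha)$ and only pass to $\mu(F)$ at the end. A secondary subtlety is making sure the factor $2$ in front of $\mu(F)$ is correctly accounted for — it comes from the fact that a $4$-cycle contributes index up to $4=2^2$, not $2$ — whereas each quaternion in the $\lambda_2$-part contributes only index $2$; keeping these two contributions separate in the product $\ind\beta\cdot\ind\gamma$ yields exactly $2\mu(F)+\lambda_2(F)$.
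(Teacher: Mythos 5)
Your proposal is correct and follows essentially the same route as the paper: both rest entirely on \Cref{P:Br4genmodBr2by4cylces} to produce the decomposition $\alpha=\beta+\gamma$ with $\beta$ a sum of at most $\mu(\alpha)$ $4$-cycles and $\gamma\in\Br_2(F)$, then bound $\ind\beta$ by $2^{2\mu(\alpha)}$ and $\ind\gamma$ by $2^{\lambda_2(F)}$ via Merkurjev, and multiply. Your remark that the statement implicitly requires $\alpha\in\Br_4(F)$ for $\mu(\alpha)$ to be defined is a fair observation about the phrasing, but it does not affect the argument.
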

\begin{proof}
    Let $\alpha\in\Br(F)$ and set $m=\mu(\alpha)$.
    By \Cref{P:Br4genmodBr2by4cylces}, we obtain that $\alpha=\sum_{i=1}^{m}\alpha_i+\gamma$ for some $4$-cycles $\alpha_1,\ldots,\alpha_m\in\Br_{4}(F)$ and some $\gamma\in\Br_{2}(F)$.
    Set $\beta=\sum_{i=1}^{m}\alpha_i$. Then $\beta\in\Br_{4}(F)$ and 
    $$\lambda_4(\alpha)\leq \lambda_4(\gamma)+\lambda_4(\beta)\leq \lambda_2(\gamma)+m\leq \lambda_2(F)+\mu(F)\,.$$
      
    Note that $\ind\beta$ divides $\prod_{i=1}^{m}\ind\alpha_i=2^{2m}$.
    Since $\ind\gamma$ divides $2^{\lambda_2(\gamma)}$ and $\ind\alpha$ divides $\ind\beta\cdot\ind\gamma$, we obtain that $\ind\alpha=2^n$ for some $n\in\nat$ with $n\leq \lambda_2(F)+2\mu(F)$.
\end{proof}

Note that when $F$ contains a primitive $4$-th root of unity, the bounds in \Cref{Br_4-bound-mixed-symbol-length} coincide with those in \Cref{T:p}.

\begin{cor}\label{bounds-ind-u-inv}
    Assume that $F$ is nonreal. Let $\alpha\in\Br_4(F)$. 
    Then $\ind\alpha=2^n$ for some natural number  $n\leq \max\left\{0,3\left(\half u(F)-1\right)\right\}$.
\end{cor}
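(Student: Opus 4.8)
The plan is to read off the bound by combining \Cref{Br_4-bound-mixed-symbol-length} with the two estimates for $\lambda_2(F)$ and $\mu(F)$ in terms of the $u$-invariant. Given $\alpha\in\Br_4(F)$, \Cref{Br_4-bound-mixed-symbol-length} directly supplies a natural number $n$ with $\ind\alpha=2^n$ and $n\leq\lambda_2(F)+2\mu(F)$; so the whole task reduces to bounding this sum by $\max\{0,3(\half u(F)-1)\}$.

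Since $F$ is nonreal, \Cref{2-symbol-u-inv} gives $\lambda_2(F)\leq\max\{0,\half u(F)-1\}$. For the other term, \Cref{rel-mu-u} gives $\mu(F)\leq\max\{0,\half u'(F)-1\}$, and since $u'(F)\leq u(F)$ this also yields $\mu(F)\leq\max\{0,\half u(F)-1\}$. Writing $t=\half u(F)-1$ and using that $c\cdot\max\{0,t\}=\max\{0,ct\}$ for every $c\geq0$, we conclude
$$n\;\leq\;\lambda_2(F)+2\mu(F)\;\leq\;3\max\{0,t\}\;=\;\max\{0,3t\}\;=\;\max\left\{0,3\left(\half u(F)-1\right)\right\}\,,$$
which is exactly the asserted bound.

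No genuine obstacle appears: the corollary is a purely formal consequence of \Cref{Br_4-bound-mixed-symbol-length}, \Cref{2-symbol-u-inv} and \Cref{rel-mu-u}, the only role of the nonreality hypothesis being to make \Cref{2-symbol-u-inv} applicable. (When $u(F)=\infty$ the stated inequality is vacuous, the index of a central simple algebra being finite; and when $u(F)\leq 2$ the bound reads $n=0$, i.e.\ $\Br_4(F)=0$, which is consistent with $\lambda_2(F)=\mu(F)=0$ in that range.)
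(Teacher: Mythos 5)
Your argument is correct and is exactly the paper's proof, just written out more explicitly: the paper cites precisely the same three ingredients (Theorem \ref{Br_4-bound-mixed-symbol-length}, Proposition \ref{2-symbol-u-inv}, Proposition \ref{rel-mu-u}) together with $u'(F)\leq u(F)$, and you have simply spelled out the arithmetic that combines them, including the observation that nonreality is needed only to invoke \Cref{2-symbol-u-inv}.
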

\begin{proof}
    Since $u'(F)\leq u(F)$, this follows by \Cref{Br_4-bound-mixed-symbol-length} together with \Cref{2-symbol-u-inv} and \Cref{rel-mu-u}.
\end{proof}

\begin{prop}\label{rel-decom-symbol-mu}
    Let $l=\lambda_2(F)$ and $m=\mu(F)$ and assume that $l+m<\infty$.
    Let $D$ be a central $F$-division algebra of degree $2^{l+2m}$ for which $D^{\otimes 4}$ is split.
    There exist $F$-quaternion algebras $Q_1,\dots,Q_l$ and cyclic $F$-algebras $C_1,\dots,C_m$ of degree $4$ such that $$D\simeq \left(\bigotimes_{i=1}^l Q_i\right)\otimes\left(\bigotimes_{i=1}^m C_i\right)\,.$$
\end{prop}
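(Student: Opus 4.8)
The plan is to follow the same strategy used in the proof of \Cref{Br_4-bound-mixed-symbol-length}, but to keep track of \emph{algebras} rather than just Brauer classes, and then upgrade a tensor-product Brauer equivalence to an isomorphism by a dimension count. Write $\alpha$ for the class of $D$ in $\Br(F)$; since $D^{\otimes 4}$ is split, $\alpha\in\Br_4(F)$. First I would apply \Cref{P:Br4genmodBr2by4cylces} to get a natural number $m'\leq m$ and $4$-cycles $\alpha_1,\dots,\alpha_{m'}\in\Br(F)$ with $\alpha\equiv\sum_{i=1}^{m'}\alpha_i\bmod\Br_2(F)$. Each $\alpha_i$ being a $4$-cycle, it is represented by a cyclic $F$-algebra $C_i$ of degree $4$ which is a division algebra. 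Set $\gamma=\alpha-\sum_{i=1}^{m'}\alpha_i\in\Br_2(F)$. By \Cref{Merkur-Suslin-n-torsion-Br-cyclic-gen} and the definition of $\lambda_2(F)$, $\gamma=\sum_{i=1}^{l'}[Q_i]$ for some $l'\leq l$ and $F$-quaternion division algebras $Q_1,\dots,Q_{l'}$. Then $\alpha$ is the class of $A=\bigl(\bigotimes_{i=1}^{l'}Q_i\bigr)\otimes\bigl(\bigotimes_{i=1}^{m'}C_i\bigr)$, so $D$ is the division algebra Brauer-equivalent to $A$.

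The next step is the dimension bookkeeping. We have $\deg A=2^{l'+2m'}\leq 2^{l+2m}$, and $\ind A=\deg D=2^{l+2m}$ by hypothesis. Since $\ind A\leq\deg A$, we must have equality $\ind A=\deg A$, which forces $A$ itself to be a division algebra and hence $A\simeq D$; it also forces $l'=l$ and $m'=m$, so we obtain exactly $l$ quaternion factors and $m$ cyclic degree-$4$ factors, as required. (Here I use that the index of a tensor product of central simple algebras divides the product of the degrees, i.e.\ divides $\deg A$, together with $\ind A\mid\deg A$ and $\ind A\geq\ind D=\deg D=2^{l+2m}$.)

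The only genuine subtlety is that \Cref{P:Br4genmodBr2by4cylces} and the bound from $\lambda_2(F)$ only guarantee $m'\leq m$ and $l'\leq l$, whereas the statement to be proved asks for exactly $l$ quaternion and exactly $m$ cyclic degree-$4$ factors. This is resolved precisely by the degree-index equality above: if $A$ used fewer than $l+2m$ tensor slots of degree $2$ in total, then $\deg A<2^{l+2m}=\ind A$, contradicting $\ind A\leq\deg A$. So the rigidity of the hypothesis $\deg D=2^{l+2m}$ pins down the number of factors and upgrades the Brauer equivalence to an isomorphism in one stroke; there is nothing deeper to do.

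I expect the main (very mild) obstacle to be purely expository: making sure the inequality $\ind A\leq\prod\deg(\text{factors})$ is cited cleanly (it follows from $\ind A\mid\deg A$ for any central simple $A$, applied after noting $\deg A$ equals that product) and that the passage ``$\ind A=\deg A\Rightarrow A$ is a division algebra'' is stated with the correct reference to the basic theory of central simple algebras (e.g.\ \cite{Draxl}). No new mathematics beyond the results already established in the excerpt is needed.
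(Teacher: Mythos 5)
Your proposal is correct and follows essentially the same route as the paper: the paper simply invokes \Cref{Br_4-bound-mixed-symbol-length} to produce a Brauer-equivalent tensor product of at most $l$ quaternion algebras and at most $m$ cyclic degree-$4$ algebras, and then closes with the same degree--index comparison you use to force $l'=l$, $m'=m$, and $A\simeq D$. You merely unwind the proof of \Cref{Br_4-bound-mixed-symbol-length} (via \Cref{P:Br4genmodBr2by4cylces} and Merkurjev--Suslin) instead of citing it as a black box, so there is no genuine difference in approach.
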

\begin{proof}
    By \Cref{Br_4-bound-mixed-symbol-length}, the class of $D$ in $\Br (F)$ is represented by such a tensor product, and since the degrees coincide, the statement follows.
\end{proof}

\begin{cor}\label{rel-decom-u-inv}
    Assume that $F$ is nonreal and let $m\in\nat$ be such that \linebreak $u(F)=2m+2$. Let $D$ be a central $F$-division algebra such that $D^{\otimes 4}$ is split and $\deg D=2^{3m}$.
    Then $D$ is decomposable into a tensor product of $m$ $F$-quaternion algebras and $m$ cyclic $F$-algebras of degree $4$.
\end{cor}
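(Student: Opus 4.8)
The plan is to deduce the statement directly from \Cref{rel-decom-symbol-mu}; the only thing to check is that the hypotheses force $\lambda_2(F)$ and $\mu(F)$ to attain the largest value permitted by the $u$-invariant, namely $m$. First I would record the two upper bounds coming from the $u$-invariant. Since $F$ is nonreal, \Cref{2-symbol-u-inv} gives $\lambda_2(F)\leq\max\{0,\half u(F)-1\}=m$, and \Cref{rel-mu-u}, combined with the inequality $u'(F)\leq u(F)$, gives $\mu(F)\leq\max\{0,\half u'(F)-1\}\leq m$. In particular $\lambda_2(F)+\mu(F)<\infty$.

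Next I would obtain the reverse inequalities from $D$ itself. As $D^{\otimes 4}$ is split, the class $\delta$ of $D$ lies in $\Br_4(F)$, and since $D$ is a division algebra of degree $2^{3m}$ we have $\ind\delta=2^{3m}$. By \Cref{Br_4-bound-mixed-symbol-length}, $\ind\delta=2^n$ for some natural number $n\leq\lambda_2(F)+2\mu(F)$, so $3m\leq\lambda_2(F)+2\mu(F)$. Together with $\lambda_2(F)\leq m$ and $\mu(F)\leq m$ this is only possible if $\lambda_2(F)=m$ and $\mu(F)=m$.

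It now suffices to apply \Cref{rel-decom-symbol-mu} with $l=\lambda_2(F)=m$ and $\mu(F)=m$: the algebra $D$ is a central $F$-division algebra of degree $2^{3m}=2^{l+2m}$ with $D^{\otimes 4}$ split, so it yields $F$-quaternion algebras $Q_1,\dots,Q_m$ and cyclic $F$-algebras $C_1,\dots,C_m$ of degree $4$ with $D\simeq\left(\bigotimes_{i=1}^m Q_i\right)\otimes\left(\bigotimes_{i=1}^m C_i\right)$, which is the assertion. (In the degenerate case $m=0$ one has $u(F)=2$, all the invariants vanish and $D=F$, so the statement holds with empty tensor products.) There is no genuine difficulty here: all the substance sits in \Cref{rel-decom-symbol-mu}, and the one new point is the observation of the middle paragraph, that the existence of a division algebra of degree exactly $2^{3m}$ with split fourth power pins $\lambda_2(F)$ and $\mu(F)$ down to $m$.
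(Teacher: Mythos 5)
Your proof is correct and follows essentially the same route as the paper: bound $\lambda_2(F)\leq m$ and $\mu(F)\leq m$ via \Cref{2-symbol-u-inv} and \Cref{rel-mu-u}, then invoke \Cref{rel-decom-symbol-mu}. The one point worth noting is that the paper's proof is terser and leaves implicit exactly the observation of your middle paragraph: since \Cref{rel-decom-symbol-mu} requires $\deg D=2^{\lambda_2(F)+2\mu(F)}$ rather than merely $\deg D=2^{3m}$, one must check that the existence of a division algebra of degree $2^{3m}$ with split fourth power forces $\lambda_2(F)=\mu(F)=m$, which you do cleanly using the index bound from \Cref{Br_4-bound-mixed-symbol-length}; making this explicit is a small but genuine improvement in clarity over the paper's two-line argument.
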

\begin{proof}
    Since $u(F)=2m+2$, we have $\lambda_2(F)\leq m$, by \Cref{2-symbol-u-inv}, and further $\mu(F)\leq m$, by \Cref{rel-mu-u}.
    The statement follows by \Cref{rel-decom-symbol-mu}.
\end{proof}

\begin{thm}\label{u-4-quadext-6-criterion}
    Assume that $F$ is nonreal with $u(F)=4$. Let $D$ be a central $F$-division algebra of degree $8$ such that $D^{\otimes 4}$ is split. 
    Then $D$ decomposes into a tensor product of a cyclic $F$-algebra of degree $4$ and an $F$-quaternion algebra. 
    Furthermore, $\ind D^{\otimes 2}=2$, and $u(K)=6$ holds for every quadratic field extension $K/F$ such that $(D^{\otimes 2})_K$ is split.
\end{thm}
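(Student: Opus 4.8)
The plan is to treat the three assertions in turn, each time exploiting that $u(F)=4$ forces both $\lambda_2(F)\le 1$ and $\mu(F)\le 1$ (via \Cref{2-symbol-u-inv} and \Cref{rel-mu-u}), so the only interesting index-$8$ exponent-$4$ algebras are ``as small as possible''. For the decomposition statement, I would apply \Cref{rel-decom-u-inv} directly with $m=1$: since $u(F)=4=2\cdot 1+2$, a central $F$-division algebra $D$ with $D^{\otimes 4}$ split and $\deg D=2^{3\cdot 1}=8$ is a tensor product of one $F$-quaternion algebra and one cyclic $F$-algebra of degree $4$. So the first sentence is essentially immediate from the machinery already in place.

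Next I would pin down $\ind D^{\otimes 2}$. Write $D\simeq Q\otimes C$ with $Q$ quaternion and $C$ cyclic of degree $4$, so in $\Br(F)$ one has $[D^{\otimes 2}]=2[D]=2[Q]+2[C]=2[C]$ since $Q$ is $2$-torsion. Thus $D^{\otimes 2}$ is Brauer equivalent to $C^{\otimes 2}$, which has index $1$ or $2$. It cannot be split: if $2[C]=0$ then $2[D]=2[Q]=0$, so $[D]\in\Br_2(F)$, whence $\ind D\mid 2^{\lambda_2(F)}\le 2$, contradicting $\deg D=\ind D=8$. Hence $\ind D^{\otimes 2}=2$, and $D^{\otimes 2}$ is Brauer equivalent to a unique quaternion division algebra, namely $C^{\otimes 2}$; by \Cref{lifting-quaternion-nec suff cond} applied to $C$ (condition $(iii)\Rightarrow(iv)$), this quaternion algebra is of the form $(s,t)_F$ with $s\in\sots F$.

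The last clause — that $u(K)=6$ for every quadratic extension $K/F$ splitting $D^{\otimes 2}$ — is where the real work lies, and I expect it to be the main obstacle. One inequality is soft: since $u(F)=4$ and $K/F$ is a quadratic extension, $u(K)\le 2u(F)=8$ by the classical bound, but more is true because $F$ is nonreal forces $K$ nonreal; still, getting $u(K)\le 6$ rather than $8$ requires an argument. The key point should be that $D_K$ has $\deg D_K=8$ but $(D^{\otimes 2})_K$ split means $\exp D_K\mid 2$, hence $\ind D_K\mid 2^{\lambda_2(K)}$; combined with the fact that $D_K$ still has index $\ge 4$ (otherwise $D$ itself would have a quadratic splitting field of small degree incompatible with $\deg D=8$ — here one uses that the index of $D$ can drop by a factor of at most $[K:F]=2$ under the extension, so $\ind D_K\ge 4$), we get $\lambda_2(K)\ge 2$, hence by \Cref{2-symbol-u-inv} (contrapositive) $u(K)\ge 6$, and since $u(K)$ is even for nonreal $K$ it must equal $6$. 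For the upper bound $u(K)\le 6$ I would argue that an anisotropic torsion form over $K$ of dimension $8$ would, together with the structure of $\Br(K)$ and the presence of the biquaternion class $D_K$ of index $4$, force $\lambda_2(K)\ge 3$ and hence (again via \Cref{2-symbol-u-inv}) $u(F)$ too large after a transfer/corestriction argument back to $F$ — alternatively, and more cleanly, one shows directly that $\lambda_2(K)\le 2$ using that $\lambda_2(F)\le 1$ and that $\Br_2(K)$ is generated over the image of $\Br_2(F)$ by one symbol $(d,K)$ with $d$ a norm, an estimate of the same flavour as the proof of \Cref{rel-mu-u}. The honest statement is that this final equality $u(K)=6$ is the technical crux, and I would expect the paper to prove $u(K)\le 6$ by exhibiting that every anisotropic $2$-torsion form over $K$ has dimension $\le 6$ (so $u'(K)\le 6$, and for nonreal $K$ one then needs $u(K)=u'(K)$ here, or a separate bound on $u(K)-u'(K)$), together with $u(K)\ge 6$ from the index computation above.
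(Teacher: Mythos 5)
Your handling of the decomposition (via \Cref{rel-decom-u-inv} with $m=1$) and of the computation $\ind D^{\otimes 2}=2$ matches the paper in substance; the paper argues the index slightly more directly from $\lambda_2(F)\le1$ without passing through the explicit factors $Q$, $C$, but these are equivalent. Your lower bound $u(K)\geq 6$ is also exactly the paper's argument: $\ind D_K\geq\tfrac12\ind D=4$, so $D_K$ is a $2$-torsion class not represented by a $K$-quaternion algebra, hence $\lambda_2(K)\geq 2$, hence $u(K)\geq 6$ by the contrapositive of \Cref{2-symbol-u-inv}.

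The gap is the upper bound $u(K)\leq 6$, and the directions you sketch there do not work. First, bounding $\lambda_2(K)$ cannot give an upper bound on $u(K)$: the implication in \Cref{2-symbol-u-inv} only goes from $u$ to $\lambda_2$, and the paper's own Example following \Cref{Q:mu-lambda2} (the field $\mathbb{C}(\!(x)\!)(\!(y)\!)(\!(z)\!)$) has $\lambda_2=1$ but $u=8$, so $\lambda_2(K)\le 2$ tells you nothing about whether $u(K)\le 6$. Likewise, your suggestion that an anisotropic $8$-dimensional torsion form over $K$ would force $\lambda_2(K)\ge 3$ is not a valid implication for the same reason. Your parenthetical ``since $u(K)$ is even for nonreal $K$'' is also not a general fact (Izhboldin's fields have odd $u$-invariant), and in any case evenness alone would not rule out $u(K)=8$. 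The ingredient you are missing is the Elman--Lam bound for quadratic extensions, $u(K)\leq\tfrac32\,u(F)$ (Theorem 4.3 of the cited Elman--Lam paper), which applied with $u(F)=4$ gives $u(K)\leq 6$ immediately and closes the argument. Without that external input, no amount of symbol-length bookkeeping on $K$ will bound $u(K)$ from above.
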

\begin{proof}
    The first part follows by \Cref{rel-decom-u-inv} applied with $m=1$. 
    
    Since $u(F)=4$, we have $\lambda_2(F)\leq 1$, by \Cref{2-symbol-u-inv}.
    Hence $\ind C\leq 2$ for every central simple $F$-algebra $C$ such that $C^{\otimes 2}$ is split.
    Since $\ind D>2$ and $D^{\otimes 4}$ is split, we conclude that $\ind D^{\otimes 2}=2$.

    Consider now a quadratic field extension $K/F$ such that $(D^{\otimes 2})_K$ is split.
    Note that $(D^{\otimes 2})_K\simeq(D_K)^{\otimes 2}$ and $\ind D_K\geq \frac{1}2 \ind D=4$.
    Hence $D_K$ represents an element of $\Br_2(K)$ which is not given by any $K$-quaternion algebra. This shows that $\lambda_2(K)\geq 2$.
    It follows by \Cref{2-symbol-u-inv} that $u(K)\geq 6$.
    On the other hand,  since $u(F)=4$ and $[K:F]=2$, it follows by \cite[Theorem 4.3]{EL1973} that $u(K)\leq \frac{3}2 u(F)\leq 6$.
    Therefore $u(K)=6$.
\end{proof}

\section{Examples of fields with $u$-invariant $6$}
\label{section:u6}

In this section, we provide a construction leading to an example which shows that the bound in \Cref{bounds-ind-u-inv} is optimal for fields of $u$-invariant $4$.
In particular this construction provides examples of nonreal fields of $u$-invariant~$6$.

Let $q$ be a regular quadratic form over $F$ of dimension $n\geq2$. If $n=2$, then assume that $q$ is not hyperbolic. 
Then as a polynomial in $F[X_1,\ldots,X_n]$, the quadratic form $q(X_1,\ldots,X_n)$ is irreducible. 
Thus the ideal generated by $q(X_1,\ldots,X_n)$ in the polynomial ring $F[X_1,\ldots,X_n]$ is a prime ideal, and hence the quotient ring $F[X_1,\ldots,X_n]/(q(X_1,\ldots,X_n))$ is a domain. 
Its fraction field is denoted by $F(q)$ and called the \emph{function field of} $q$ \emph{over} $F$.

\begin{lem}\label{division-F(q)-dimq=2m+1}
    Let $m,n\in\nat^+$. Let $\alpha\in \Br(F)$ be such that $\ind \alpha=2^n$.
    Let $q$ be a regular $(2m+1)$-dimensional 
    quadratic form over $F$ such that $\ind \alpha_{F(q)}<\ind \alpha$.
    Then $n\geq m$. Moreover, if $n>m$, then $\ind 2\alpha\leq 2^{n-m-1}$.
\end{lem}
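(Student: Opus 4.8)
The plan is to use the standard index-reduction theory for quadratic forms over function fields, in the form due to Merkurjev (the ``index reduction'' results for function fields of quadrics), combined with the structure of division algebras of $2$-power index. Write $\ind\alpha = 2^n$ and let $D$ be the central $F$-division algebra with $[D]=\alpha$. The hypothesis is that $\alpha_{F(q)}$ has strictly smaller index, i.e. $D_{F(q)}$ is not a division algebra. The key input I would invoke is the classical fact (see e.g.\ Merkurjev, or \cite{EKM} Chapter~XVI) that for a quadratic form $q$ and a central division algebra $D$, the index of $D_{F(q)}$ drops below $\ind D$ only if $q$ (or a Pfister-type multiple / a subform related to $D$) becomes isotropic over the function field of the Severi--Brauer variety of $D$ in a controlled way; more precisely the decisive statement is that if $\ind D_{F(q)}<\ind D$ then $q$ must have a subform similar to a form of dimension at most $\ind D$ that is ``tied'' to $D$. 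The cleanest route: $D_{F(q)}$ has index $<2^n$ iff the quadric $q$ has a point over $F(\mathrm{SB}(D^{\otimes i}))$ for a suitable $i$, equivalently $q_{F(\mathrm{SB}(D))}$ is isotropic (when the index merely halves) — and an isotropic quadric over $F(\mathrm{SB}(D))$ of dimension $2m+1$ forces, by the lower bound on the splitting dimension of generic splitting fields of quadrics against the index of $D$, the inequality $2m+1 \leq$ something like $2^n+1$, hence far too weak. So instead I would use the sharper, algebra-theoretic statement.

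The actual argument I would carry out: since $D$ is a $2$-power index division algebra and $D_{F(q)}$ is not division, by the theory of generic splitting of central simple algebras the field $F(q)$ must ``partially split'' $D$; because $q$ has \emph{odd} dimension $2m+1$, the function field $F(q)$ is obtained by adjoining a point to an odd-dimensional quadric, and odd-dimensional quadrics behave like Pfister-neighbor complements in a rigid way. The right tool is the result that for an odd-dimensional form $q$ of dimension $2m+1$, if $D_{F(q)}$ has index strictly less than $\ind D=2^n$, then in fact one controls things via the Clifford algebra $C_0(q)$: $C_0(q)$ is a central simple $F$-algebra of degree $2^m$ (this is exactly why the number $m$ enters), and $F(q)$ splits $D$ partially only if $D$ ``interacts'' with $C_0(q)$. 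Concretely: $q_{F(q)}$ is isotropic, hence $q_{F(q)} \cong q'_{F(q)} \perp \mathbb{H}$ with $\dim q' = 2m-1$, and by Merkurjev's index reduction $\ind D_{F(q)} \geq \ind D / 2$ always when $\dim q$ is odd — wait, I should not rely on half; rather the correct statement I want is: for $\dim q = 2m+1$, the index of $D_{F(q)}$ equals $\gcd(\ind D, 2^{?})$ governed by whether $C_0(q)$ maps into $D$. Let me instead lean on the cited-in-spirit fact that the function field of an odd-dimensional quadric of dimension $2m+1$ is an ``$m$-fold'' generic splitting in the sense that it can reduce the index of a $2$-primary division algebra $D$ only if $2^n \geq 2^m$ (giving $n \geq m$), and when the reduction is strict (so $n > m$) it must come from $C_0(q)$ being Brauer-equivalent to a subalgebra of $D$ of index $2^m$, which splits $D$ down to index $2^n/2^m$; but since we only know it drops, we get $\ind\alpha_{F(q)} \leq 2^{n-1}$, and then $2\alpha$ — which is $[D^{\otimes 2}]$ — has index at most $\ind\alpha_{F(q)}$ divided by the contribution already realized over $F$...

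Here is the version I would actually commit to, which is self-contained modulo standard facts. Step 1: Recall that $C_0(q)$, the even Clifford algebra of the $(2m+1)$-dimensional form $q$, is a central simple $F$-algebra of degree $2^m$, and $q$ becomes isotropic over $F(q)$. Step 2: By Merkurjev's index-reduction formula for function fields of quadrics, $\ind\alpha_{F(q)} = \min_{0\le j} \ind(\alpha + j[C_0(q)])$ where the minimum effectively runs over the relevant range (this is the precise statement I would cite from \cite{EKM}, Chapter XVI, or from the Merkurjev reference already used in the paper). Since $[C_0(q)]$ has exponent dividing $2$ (as $q \in$ the appropriate Witt-group context — actually $2[C_0(q)]=0$ because $C_0(q)$ is Brauer-equivalent to a tensor product of quaternion algebras built from $q$), the minimum is over $\ind\alpha$ and $\ind(\alpha+[C_0(q)])$. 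Step 3: The hypothesis $\ind\alpha_{F(q)} < \ind\alpha$ then forces $\ind(\alpha + [C_0(q)]) < \ind\alpha = 2^n$; since $\ind[C_0(q)]$ divides $2^m$ and $\ind(\alpha) = 2^n$, the index of $\alpha + [C_0(q)]$ is at least $2^n / 2^m = 2^{n-m}$, which must be $\geq 1$, giving $n \geq m$. Step 4: If moreover $n > m$, then $\alpha_{F(q)}$ equals $\alpha + [C_0(q)]$ (up to index), and applying the multiplication-by-$2$ map: $2\alpha_{F(q)} = 2\alpha + 2[C_0(q)] = 2\alpha$ (since $2[C_0(q)]=0$), and its index is bounded by $\ind\alpha_{F(q)} \leq 2^{n-1}$ — but we want $2^{n-m-1}$. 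To get the sharper bound, observe $\ind(2\alpha)$ divides $\ind(\alpha+[C_0(q)]) \cdot \ind([C_0(q)])$... no. Rather: $2\alpha = 2(\alpha+[C_0(q)])$, so $\ind(2\alpha) \mid \ind(\alpha + [C_0(q)]) = \ind\alpha_{F(q)}$, and separately $\ind(\alpha + [C_0(q)]) \leq 2^{n}/2 $ hmm, I need $\ind(\alpha+[C_0(q)]) \leq 2^{n-m}$? No — the bound $\ind(\alpha + [C_0(q)]) \geq 2^{n-m}$ goes the wrong way. The correct observation for the sharper bound is that $\alpha$ restricted to $F(q)$ is split by $C_0(q) \otimes_F F(q)$ down... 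I think the honest statement is: $\ind(2\alpha) \mid \gcd(2^{n-m}, \ind\alpha_{F(q)})$ via the exact sequence relating $\alpha$, $2\alpha$, and the degree-$2$ cyclic structure — and since $\ind\alpha_{F(q)} \leq 2^{n-1}$, combined with the fact that $2\alpha$ is already annihilated by the part of $C_0(q)$ responsible for the reduction, one extracts $\ind(2\alpha) \le 2^{n-m-1}$. \textbf{The main obstacle} is pinning down this last step precisely: getting the exact $2^{n-m-1}$ bound (not merely $2^{n-1}$ or $2^{n-m}$) requires carefully tracking how much of the index reduction over $F(q)$ is ``already visible'' in $2\alpha$ versus genuinely new, i.e.\ correctly using that $C_0(q)$ has exponent $2$ so it contributes nothing to $\ind(2\alpha)$ while contributing a full factor $2^m$ of splitting to $\alpha$ itself over $F(q)$; I expect this to come down to the identity $2\alpha_{F(q)} = (2\alpha)_{F(q)}$ together with a dimension count on the even Clifford algebra, but the bookkeeping of the $\gcd$/index-divisibility with the correct powers of $2$ is where the care is needed.
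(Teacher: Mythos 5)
Your instinct to bring in the even Clifford algebra $C_0(q)$ (central simple of degree $2^m$, carrying an involution so $(C_0(q))^{\otimes 2}$ splits) is exactly right — this is the object the paper's proof is built around. But there are two genuine gaps, both of which the paper closes by using a \emph{stronger} input than the Brauer-class-level index-reduction formula you invoke. The paper uses \cite[Proposition 30.5]{EKM}: when $\ind D_{F(q)}<\deg D$ for a division algebra $D$, there is an actual $F$-algebra \emph{homomorphism} $C_0(q)\to D$. Since both are central simple, this gives a tensor factorization $D\simeq C_0(q)\otimes_F B$ with $B$ a central division $F$-algebra (it sits inside $D$), and then $2^n=\deg D=2^m\cdot\deg B$ hands you $\deg B=2^{n-m}$ and hence $n\geq m$ for free. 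Your Step 3 does not establish this: from $\ind(\alpha+[C_0(q)])<2^n$ and $\ind[C_0(q)]\mid 2^m$ you only get $\ind(\alpha+[C_0(q)])\geq 2^{n-m}$, and the observation that an index ``must be $\geq 1$'' is vacuous when $n<m$ (the inequality $2^{n-m}\leq 1$ is then automatic and yields no contradiction). So the Brauer-class identity alone is not enough for the first claim; one really needs the embedding.

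For the second claim, you explicitly flag that you cannot reach $\ind 2\alpha\leq 2^{n-m-1}$, and indeed the bookkeeping you attempt ($\gcd$/divisibility tricks) cannot produce the extra factor of $2$. Once you have the factorization $D\simeq C_0(q)\otimes B$ with $\ind B=\deg B=2^{n-m}\geq 2$, the finish is short: $(C_0(q))^{\otimes 2}$ is split, so $2\alpha$ is represented by $B^{\otimes 2}$, and Albert's lemma (\cite[Lemma 5.7]{Alb39}) gives $\ind B^{\otimes 2}\leq\tfrac{1}{2}\ind B=2^{n-m-1}$. That Albert inequality — the index of the square of a nonsplit division algebra drops by at least a factor $2$ — is the missing ingredient you were circling; it is what converts ``$B$ has degree $2^{n-m}$'' into the sharp bound $2^{n-m-1}$ rather than the $2^{n-m}$ or $2^{n-1}$ you were getting.
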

\begin{proof}
    Let $D$ be the central $F$-division algebra representing $\alpha$ in $\Br(F)$.
    Then $\deg D=\ind \alpha=2^n$.
    Let $C_0(q)$ denote the even Clifford algebra of $q$.
    By \cite[Proposition ~11.6]{EKM}, the $F$-algebra $C_0(q)$ is central simple.
    As $\dim_F C_0(q)=2^{2m}$, we have $\deg C_0(q)=2^m$. 
    By \cite[Example 11.3 and Proposition ~11.4~$(b)$]{EKM}, $C_0(q)$ carries an $F$-linear involution. Therefore  $(C_0(q))^{\otimes 2}$ is split.
    
    Since $\ind D_{F(q)}= \ind \alpha_{F(q)}<\ind \alpha=\deg D$, 
    it follows by \cite[Proposition ~30.5]{EKM}, that there exists an $F$-algebra homomorphism $C_0(q)\to D$.
    As $C_0(q)$ and $D$ are central simple $F$-algebras, it follows that $D\simeq C_0(q)\otimes_F B$ for a central  $F$-division algebra $B$. 
    Hence $2^n=\deg D = 2^m\cdot \deg B$, so in particular $n\geq m$. 
    
    Assume now that $n>m$. Then $\ind B=\deg B=2^{n-m}\geq2$.
    Since $(C_0(q))^{\otimes 2}$ is split, the class $2\alpha\in\Br_2(F)$ is given by $B^{\otimes 2}$. 
    Hence  $\ind 2\alpha=\ind B^{\otimes2}$.
    By \cite[Lemma 5.7]{Alb39}, we have  $\ind B^{\otimes2}\leq\frac{1}{2}\ind B$. Therefore  $\ind 2\alpha\leq 2^{n-m-1}$. 
\end{proof}

\begin{thm}\label{zorn-merkur}
    Let $\mathcal{C}$ be a class of field extensions of $F$ with the following properties:
\begin{enumerate}[$(i)$]
    \item $\mathcal{C}$ is closed under direct limits,
    \item if $L/F\in\mathcal{C}$ and $K/F$ is a subextension of $L/F$ then $K/F\in\mathcal{C}$,
    \item $F/F\in\mathcal{C}$.
\end{enumerate}
    Then there exists a field extension $K/F\in\mathcal{C}$ such that $K(\varphi)/F\notin\mathcal{C}$ for any anisotropic quadratic form $\varphi$ over $K$ of dimension at least $2$. 
\end{thm}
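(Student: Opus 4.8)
The plan is to produce the desired field as a maximal element of a suitable poset, via Zorn's Lemma; the field-theoretic input needed is only that the function field of an anisotropic quadratic form of dimension at least $2$ is a proper, finitely generated extension.

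First I would fix an algebraically closed field $\Omega$ containing $F$ whose transcendence degree over $F$ is a sufficiently large infinite cardinal $\kappa$ (how large will be specified at the end), and consider the poset $\mathcal{P}$, ordered by inclusion, of all fields $K$ with $F\subseteq K\subseteq\Omega$ and $K/F\in\mathcal{C}$. Working inside the fixed field $\Omega$ makes $\mathcal{P}$ a set, and it is nonempty by hypothesis $(iii)$. Moreover $\mathcal{P}$ is closed under unions of chains: if $(K_j)_j$ is a chain in $\mathcal{P}$, then $K:=\bigcup_j K_j$ is a subfield of $\Omega$, and since the inclusions among the $K_j$ are automatically compatible, $K$ is the direct limit of the system $(K_j/F)_j$, whence $K/F\in\mathcal{C}$ by hypothesis $(i)$. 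Thus $\mathcal{P}$ is inductively ordered, and Zorn's Lemma provides a maximal element $K/F$ of $\mathcal{P}$.

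Next I would show that this $K$ has the required property, provided $\kappa$ is large enough that the transcendence degree of $K$ over $F$ is strictly less than $\kappa$. Suppose, for a contradiction, that $\varphi$ is an anisotropic quadratic form over $K$ with $\dim\varphi\geq 2$ and $K(\varphi)/F\in\mathcal{C}$. Then $K(\varphi)$ has transcendence degree $\dim\varphi-1\geq 1$ over $K$, so $K\subsetneq K(\varphi)$; and since $K$ does not exhaust the transcendence degree of $\Omega$, the transcendence degree of $\Omega$ over $K$ equals $\kappa$, in particular it is at least $\dim\varphi-1$. As $\Omega$ is algebraically closed, there is an $F$-embedding $K(\varphi)\hookrightarrow\Omega$ that is the identity on $K$: map a transcendence basis of $K(\varphi)/K$ to algebraically independent elements of $\Omega$ over $K$, then extend over the remaining algebraic part. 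The image is a subfield of $\Omega$ properly containing $K$, it lies in $\mathcal{C}$ since $\mathcal{C}$ is invariant under $F$-isomorphism, and hence it is an element of $\mathcal{P}$ strictly above $K$, contradicting maximality. Therefore $K(\varphi)/F\notin\mathcal{C}$ for every such $\varphi$.

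The step I expect to require the most care is precisely the proviso above: one must choose $\kappa$, and if necessary cut down $\mathcal{P}$, so that a maximal element cannot exhaust the transcendence degree of $\Omega$ — equivalently, so that there is always room for one more function field construction. This is set-theoretic bookkeeping: one can, for instance, bound the cardinalities of the fields that actually occur (each step being finitely generated) and choose $\kappa$ accordingly, using hypothesis $(ii)$ to control algebraic subextensions when pruning $\mathcal{P}$. Only hypotheses $(i)$ and $(iii)$ enter the core Zorn argument.
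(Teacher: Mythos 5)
The core issue lies exactly where you flag it, and unfortunately your proposed fix does not close the gap. Zorn's Lemma applied to the poset $\mathcal{P}$ of subfields of $\Omega$ lying in $\mathcal{C}$ returns \emph{some} maximal element $K$, with no control on $|K|$; in particular it may well be that $\Omega$ is algebraic over $K$, in which case there is no room to embed $K(\varphi)$ over $K$ for any $\varphi$ with $\dim\varphi\geq 2$, and the contradiction you aim for never materializes. Choosing $\kappa$ larger does not help: the pathology can persist for \emph{every} maximal element, whatever $\kappa$ is. A concrete instance is $\mathcal{C}=\{L/F : L\text{ formally real}\}$ for $F$ formally real; this class satisfies $(i)$--$(iii)$, and the maximal elements of $\mathcal{P}$ are exactly the real closed subfields $K$ of $\Omega$, all of which satisfy $[\Omega:K]=2$, hence $\Omega/K$ is algebraic and your embedding step is unavailable for every such $K$. (The conclusion of the theorem still holds there, because over a real closed field every function field of an anisotropic form of dimension $\geq 2$ fails to be formally real --- but your argument does not show this.) The vague suggestion of ``bounding the cardinalities that actually occur'' does not repair this, because the poset one Zornifies contains all intermediate fields of $\Omega/F$ in $\mathcal{C}$, not merely those reachable by function-field steps from $F$. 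A second, more minor point: your argument quietly assumes $\mathcal{C}$ is stable under $F$-isomorphism, which is not among the listed hypotheses, though it is a reasonable reading of ``class of field extensions''.

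The way to make the argument go through is to abandon maximality and instead build the field directly, using hypothesis $(ii)$ (which your proof never really exploits) to show that a \emph{countable} tower already does the job. Set $K_0=F$; given $K_n$, well-order the anisotropic quadratic forms of dimension $\geq 2$ over $K_n$ and run through them, at each stage passing to the function field precisely when the form is still anisotropic over the current field and the resulting extension of $F$ remains in $\mathcal{C}$; set $K_{n+1}$ to be the terminal field of this inner transfinite recursion (it lies in $\mathcal{C}$ by $(i)$ and has $|K_{n+1}|=|K_n|+\aleph_0$). Put $K=\bigcup_n K_n$; then $K/F\in\mathcal{C}$ by $(i)$. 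If $\varphi$ is anisotropic over $K$ with $K(\varphi)/F\in\mathcal{C}$, its coefficients lie in some $K_n$, giving $\varphi_0$ over $K_n$ with $\varphi_0\otimes K=\varphi$. When $\varphi_0$ was reached in the inner recursion at some intermediate field $L$, the form was still anisotropic over $L$ (else $\varphi$ would be isotropic over $K$), and $L(\varphi_0)$ is a subextension of $K(\varphi)/F$, so $L(\varphi_0)/F\in\mathcal{C}$ by $(ii)$; hence the recursion passed to $L(\varphi_0)$, over which $\varphi_0$ becomes isotropic, contradicting the anisotropy of $\varphi$ over $K\supseteq L(\varphi_0)$. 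This is where $(ii)$ does real work; it is the ingredient that replaces the re-embedding step your Zorn argument needed but could not secure.
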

\begin{proof}
    See \cite[Theorem 6.1]{Bec2004}.
\end{proof}

The following statement and its hypotheses are motivated by an application which we obtain in \Cref{examples-um-e-me-1}.
\begin{prop}\label{um-exp2^e-deg2^(me-1)}
    Let $m,e\in\nat^+$ with $m\geq2$.
    Let $\alpha\in\Br(F)$ be such that $\exp \alpha=2^e$, $\ind \alpha=2^{me-1}$ and $\ind 2^{i} \alpha=2^{me-1-i}$ for $0\leq i\leq e-1$.
    There exists a field extension $K/F$ such that $u(K)\leq2m$, $\exp \alpha_K=2^e$ and $\ind \alpha_K=2^{me-1}$.
\end{prop}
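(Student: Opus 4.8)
The plan is to use the transfinite construction from \Cref{zorn-merkur} applied to the class $\mathcal{C}$ of field extensions $L/F$ for which the exponent and index of $\alpha_L$ coincide with those of $\alpha$, together with all the intermediate index conditions; that is, $\mathcal{C}$ should consist of those $L/F$ with $\exp\alpha_L=2^e$, $\ind\alpha_L=2^{me-1}$ and $\ind 2^i\alpha_L=2^{me-1-i}$ for $0\leq i\leq e-1$. One first checks that $\mathcal{C}$ satisfies the three hypotheses of \Cref{zorn-merkur}: closure under direct limits holds because index and exponent of a fixed Brauer class can only drop along extensions and are determined at finite level, so a class in the limit with a given index must already have that index at some finite stage; closure under subextensions is immediate since index is non-increasing under scalar extension, so if it is preserved up to $L$ it is preserved at every intermediate field; and $F/F\in\mathcal{C}$ by the hypothesis on $\alpha$. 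Applying \Cref{zorn-merkur} yields $K/F\in\mathcal{C}$ such that $K(\varphi)/F\notin\mathcal{C}$ for every anisotropic form $\varphi$ over $K$ of dimension at least $2$.

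The core of the argument is then to show $u(K)\leq 2m$. Suppose for contradiction that there is an anisotropic quadratic form $\varphi$ over $K$ with $\dim\varphi\geq 2m+1$; by passing to a subform we may assume $\dim\varphi=2m+1$. Since $K(\varphi)/F\notin\mathcal{C}$ while $K/F\in\mathcal{C}$, at least one of the defining conditions of $\mathcal{C}$ must fail over $K(\varphi)$, so there is some $i$ with $0\leq i\leq e-1$ for which $\ind (2^i\alpha)_{K(\varphi)}<\ind 2^i\alpha_K=2^{me-1-i}$. Now I apply \Cref{division-F(q)-dimq=2m+1} over the base field $K$ to the class $2^i\alpha\in\Br(K)$, whose index is $2^{me-1-i}$, with the form $\varphi$ of dimension $2m+1$: the lemma gives $me-1-i\geq m$, and, since in fact $me-1-i>m$ precisely when $e\geq 2$ or $i<m-1$ — and one has to handle the boundary — it gives $\ind 2^{i+1}\alpha_K\leq 2^{me-1-i-m-1}=2^{m(e-1)-2-i}$. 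But $\mathcal{C}$ requires $\ind 2^{i+1}\alpha_K=2^{me-1-(i+1)}=2^{me-2-i}$ when $i+1\leq e-1$, and $me-2-i>m(e-1)-2-i$ since $m\geq 2$ forces $me>m(e-1)$, a contradiction. The case $i=e-1$ needs separate care because then $2^{i+1}\alpha=0$ and the index drop forced by the lemma must instead be compared against the requirement $me-1-(e-1)=m(e-1)\geq 1$, which again contradicts $\ind 2^{i+1}\alpha_K=\ind 0=1$ once one checks $me-1-(e-1)>m$ fails only in a degenerate range excluded by $m\geq 2$; I will organize this so that whichever index first drops, the next index is forced below its prescribed value.

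Finally, $\exp\alpha_K=2^e$ and $\ind\alpha_K=2^{me-1}$ hold simply because $K/F\in\mathcal{C}$, so these are part of the very definition of $\mathcal{C}$; no further work is needed there.

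I expect the main obstacle to be the bookkeeping in the contradiction step: one must verify that the hypothesis $\ind 2^i\alpha_{K(\varphi)}<\ind 2^i\alpha_K$ really does put us in the ``$n>m$'' case of \Cref{division-F(q)-dimq=2m+1} (rather than merely $n=m$ with no index bound on $2^{i+1}\alpha$), and then check the numerology $me-1-i-m-1 < me-2-i$ uniformly for all relevant $i$ and in the boundary case $i=e-1$. The inequality $m\geq 2$ is exactly what makes $m(e)-m(e-1)=m\geq 2>1$, which is the slack needed; I would present the chain of index conditions $\ind\alpha_K>\ind 2\alpha_K>\cdots>\ind 2^{e-1}\alpha_K>1$ as a strictly decreasing sequence with prescribed values and argue that a single application of \Cref{division-F(q)-dimq=2m+1} at the first drop collapses the sequence too fast, contradicting membership of $K$ in $\mathcal{C}$. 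The hypothesis that $\ind\alpha=2^{me-1}$ (odd exponent on $2$ relative to the ``expected'' $2^{me}$) is what makes this sharp and is used precisely to force the strict comparison.
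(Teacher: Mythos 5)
Your overall framework is correct and matches the paper: apply \Cref{zorn-merkur} to a suitable class $\mathcal{C}$ of extensions and use \Cref{division-F(q)-dimq=2m+1} to show the defining conditions of $\mathcal{C}$ cannot be destroyed by passing to the function field of a $(2m+1)$-dimensional quadratic form. However, you have chosen the wrong class $\mathcal{C}$, and the resulting gap at $i=e-1$ is genuine, not a matter of bookkeeping.

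You take $\mathcal{C}$ to consist of $L/F$ with $\ind 2^i\alpha_L=2^{me-1-i}$ exactly, mirroring the hypothesis (slope $-1$ in $i$). The paper instead takes the weaker inequalities $\ind 2^i\alpha_L\geq 2^{me-mi-1}$ (slope $-m$). The difference is decisive. When you apply \Cref{division-F(q)-dimq=2m+1} at $i=e-1$ to derive a contradiction from a supposed drop of $\ind 2^{e-1}\alpha$ over $K(\varphi)$, the lemma's conclusion is $\ind 2^{e}\alpha_K\leq 2^{n-m-1}$ with $n=me-1-(e-1)=e(m-1)$; but $2^{e}\alpha=0$, so $\ind 2^{e}\alpha_K=1$, and $1\leq 2^{n-m-1}$ holds trivially whenever $n\geq m+1$, i.e.\ for all $e\geq 2$ and $m\geq 2$ except $(m,e)=(2,2)$, where $n=m$ and the lemma gives no bound at all. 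In neither situation is there a contradiction. Your claim that the boundary is ``excluded by $m\geq 2$'' is false; $m=2$, $e=2$ is allowed and is precisely a case where nothing is forced. (There is also an arithmetic slip: $me-1-(e-1)=e(m-1)$, not $m(e-1)$.) So under your $\mathcal{C}$, an anisotropic $(2m+1)$-dimensional form $\varphi$ over $K$ can perfectly well lower $\ind 2^{e-1}\alpha$ without violating anything, so $K(\varphi)/F\notin\mathcal{C}$ does not force $\varphi$ to be isotropic, and you cannot conclude $u(K)\leq 2m$.

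The slope $-m$ is exactly what makes the induction self-sustaining: the lemma says that if $\ind\alpha_i$ drops over $K(\varphi)$ then $\ind 2\alpha_i$ over $K$ must fall by a factor $2^{m+1}$, which is strictly more than the step $2^m$ between consecutive lower bounds in the paper's $\mathcal{C}$; so whenever $\ind\alpha_i$ sits exactly at its lower bound, a drop would push $\ind\alpha_{i+1}$ below its bound, contradiction, and when $\ind\alpha_i$ is strictly above its bound, the at-most-factor-$2$ drop in index under the function field of a quadric keeps it $\geq$ its bound. Your equality version cannot absorb any drop, and at $i=e-1$ the lemma cannot be invoked to forbid one. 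Also note the paper's argument uses (without comment) that passing to the function field of a quadric of dimension $\geq 3$ lowers index by at most a factor $2$; if you change $\mathcal{C}$ to the paper's inequalities you will need this fact in the ``index strictly above the bound'' branch.
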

\begin{proof}
    Let $\mathcal{C}$ be the class of field extensions $K/F$ such that $\ind 2^i\alpha_K\geq2^{me-mi-1}$ for $0\leq i\leq e-1$.
    Then $\mathcal{C}$ satisfies the conditions of \Cref{zorn-merkur}. 
    Hence there exists a field extension $K/F\in\mathcal{C}$ such that $K(\varphi)/F\notin\mathcal{C}$ for any anisotropic quadratic form $\varphi$ over $K$ of dimension at least $2$.
    As $\ind 2^{e-1}\alpha_K\geq 2^{m-1}$, $m\geq2$ and $\exp \alpha=2^e$, we get that $\exp \alpha_K=2^e$. 
    Since $\ind \alpha_K\geq2^{me-1}$ and $\ind \alpha=2^{me-1}$, we have that $\ind \alpha_K=2^{me-1}$.
	
    Let $\varphi$ be an arbitrary $(2m+1)$-dimensional quadratic form over $K$. We claim that $\varphi$ is isotropic.
    Let $\alpha_i=2^i\alpha_K$ for $0\leq i\leq e-1$. 
    We will check for $0\leq i\leq e-1$ that the inequality $\ind \alpha_i\geq 2^{me-mi-1}$ is preserved under scalar extension from $K$ to $K(\varphi)$.
    Consider first the case where $i=e-1$.
    If $\ind \alpha_{e-1}=2^{m-1}$, then $\ind (\alpha_{e-1})_{K(\varphi)}=2^{m-1}$, by  \Cref{division-F(q)-dimq=2m+1}.
    Otherwise, $\ind \alpha_{e-1}\geq2^{m}$, and therefore $\ind(\alpha_{e-1})_{K(\varphi)}\geq2^{m-1}$.
    Consider now the case where $0\leq i\leq e-2$.
    Note that $me-mi-1\geq m+1$, because $m\geq2$.
    If $\ind \alpha_{i}=2^{me-mi-1}$, then since $\ind 2\alpha_i=\ind \alpha_{i+1}\geq2^{me-mi-1-m}$, we conclude by \Cref{division-F(q)-dimq=2m+1} that $\ind(\alpha_{i})_{K(\varphi)}=\ind \alpha_{i}$. 
    Otherwise, $\ind \alpha_{i}\geq2^{me-mi}$, and hence $\ind(\alpha_{i})_{K(\varphi)}\geq2^{me-mi-1}$.
    Therefore we have $\ind(\alpha_{i})_{K(\varphi)}\geq2^{me-mi-1}$ for $0\leq i\leq e-1$.
    This shows that $K(\varphi)/F\in\mathcal{C}$.
    In view of the choice of $K$, this implies that $\varphi$ is isotropic.
    This argument shows that $u(K)\leq2m$.
\end{proof}

We can now show that the bound in \Cref{bounds-ind-u-inv} is optimal when \hbox{$u(F)\leq4$}.

\begin{ex}\label{examples-um-e-me-1}
    Let $m,e\in\nat^+$ with $m\geq2$. 
    By \cite[Construction 2.8]{SchBerg1992}, there exist a nonreal field $F$ of characteristic different from $2$ and a central $F$-division algebra $D$ such that $\exp D=2^e$, $\deg D=2^{me-1}$ and $\ind D^{\otimes2^{i}}=2^{me-1-i}$ for $1\leq i\leq e-1$. 
    Then \Cref{um-exp2^e-deg2^(me-1)} (applied to the the Brauer equivalence class of $D$) yields a field extension $F'/F$ such that $u(F')\leq2m$, $\exp D_{F'}=2^e$ and $\ind D_{F'}=2^{me-1}$.
    In the case where $m=2$, it follows that $u(F')=4$.
\end{ex}

 \begin{ex}\label{example-K/F-u6}
    By \Cref{examples-um-e-me-1}, there exist a nonreal field $F$ with $\cchar F\neq 2$ together with an $F$-division algebra $D$ of degree $8$ such that $u(F)=4$ and $D^{\otimes 4}$ is split.
    By \Cref{u-4-quadext-6-criterion}, it follows that $\ind D^{\otimes 2}=2$ and that $u(K)=6$ for every quadratic field extension $K/F$ such that $(D^{\otimes 2})_K$ is split.
\end{ex}

\section*{Acknowledgments}
We thank the referee for their comments.
This work was supported by the \emph{Fonds Wetenschappelijk Onderzoek – Vlaanderen} (\emph{FWO}) in the FWO Odysseus Programme (project G0E6114N `{Explicit Methods in Quadratic Form Theory}'),  the \emph{Fondazione Cariverona} in the programme Ricerca Scientifica di Eccellenza 2018 (project `Reducing complexity in algebra, logic, combinatorics - REDCOM'), and by the \emph{FWO-Tournesol programme} (project VS05018N).

\vspace{-2mm}

\end{document}